\long\def\symbolfootnote[#1]#2{\begingroup\def\thefootnote{\fnsymbol{footnote}}
\footnote[#1]{#2}\endgroup}

\documentclass{amsart}
\usepackage{amssymb}
\usepackage{amsfonts}
\usepackage{amsmath}

\setcounter{MaxMatrixCols}{10}

\newtheorem{theorem}{Theorem}[section]
\newtheorem{corollary}[theorem]{Corollary}
\newtheorem{lemma}[theorem]{Lemma}
\theoremstyle{remark}

\theoremstyle{definition}

\theoremstyle{proposition}
\newtheorem{proposition}[theorem]{Proposition}
\numberwithin{equation}{section}

\begin{document}
\title{K\"{a}hler manifolds with real holomorphic vector fields}
\author{Ovidiu Munteanu and Jiaping Wang}
\maketitle

\begin{abstract}
For a K\"{a}hler manifold endowed with a weighted measure $e^{-f}\,dv,$ the
associated weighted Hodge Laplacian $\Delta _{f}$ maps the space of $(p,q)$%
-forms to itself if and only if the $(1,0)$-part of the gradient vector
field $\nabla f$ is holomorphic. We use this fact to prove that for such $f$%
, a finite energy $f-$harmonic function must be pluriharmonic. Motivated by
this result, we verify that the same also holds true for $f$-harmonic maps
into a strongly negatively curved manifold. Furthermore, we demonstrate that
such $f$-harmonic maps must be constant if $f$ has an isolated minimum
point. In particular, this implies that for a compact K\"{a}hler manifold
admitting such a function, there is no nontrivial homomorphism from its
first fundamental group into that of a strongly negatively curved manifold.
\end{abstract}

\symbolfootnote[0]{The first author is partially supported by NSF grant No.
	DMS-1262140 and the second author by NSF grant No. DMS-1105799}

In this paper, $\left( M,g\right) $ denotes a K\"{a}hler manifold of complex
dimension $m$ with metric $g$ and complex structure $J.$ For a smooth real
valued function $f\in C^{\infty }\left( M\right) ,$ introduce a weighted
measure of the form $dv_{f}:=e^{-f}\,dv,$ where $dv$ is the volume form
induced from the metric $g.$ With respect to the weighted volume form $%
dv_{f},$ the adjoint $d_{f}^{\ast }$ of the exterior differential $d$ acting
on $\Omega ^{p}\left( M\right) ,$ the space of $p$ forms on $M,$ is defined
by

\begin{equation*}
\int_{M}\left\langle d\omega ,\theta \right\rangle
e^{-f}=\int_{M}\left\langle \omega ,d_{f}^{\ast }\theta \right\rangle e^{-f},
\end{equation*}%
for all $\omega \in \Omega ^{p}\left( M\right) $ and $\theta \in \Omega
^{p+1}\left( M\right) $. The weighted Hodge Laplacian $\Delta_f$ is then
given by

\begin{equation*}
\Delta _{f}:=dd_{f}^{\ast }+d_{f}^{\ast }d.
\end{equation*}%
Denote by $A^{p,q}\left( M\right) $ the space of $\left( p,q\right) $-forms
on $\left( M,g\right) .$ It is well known that the Hodge Laplacian $\Delta $
preserves the type of forms, i.e., $\Delta \omega \in A^{p,q}\left( M\right) 
$ for any $\omega \in A^{p,q}\left( M\right) .$ This fact is important in
the Hodge theory of K\"{a}hler manifolds. One may ask if the same holds true
for the weighted Hodge Laplacian $\Delta _{f}.$ Obviously, the case when
both $p$ and $q$ are zero is trivially true. When $0<p+q<2m$ we note the
following result.

\begin{proposition}
\label{pq}Let $\left( M,g\right) $ be a K\"{a}hler manifold and $f\in
C^{\infty }\left( M\right) .$ Then the weighted Hodge Laplacian $\Delta _{f}$
maps the space $A^{p,q}\left( M\right) $ of $\left( p,q\right) $-forms into
itself if and only if $\nabla f$ is real-holomorphic.
\end{proposition}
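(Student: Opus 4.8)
The plan is to convert the statement into a pointwise condition on $\nabla f$ by isolating the zeroth order part of $\Delta_f$, and then to analyze that condition via the bidegree decomposition of the Lie derivative. First I would compute the weighted codifferential: integrating by parts against $dv_f = e^{-f}\,dv$ and comparing with the unweighted adjoint $d^{\ast}$ yields $d_f^{\ast} = d^{\ast} + \iota_{\nabla f}$, where $\iota_{\nabla f}$ is interior multiplication by $\nabla f$. Substituting into $\Delta_f = dd_f^{\ast} + d_f^{\ast}d$ and invoking Cartan's formula $L_{\nabla f} = d\iota_{\nabla f} + \iota_{\nabla f}d$ gives the operator identity $\Delta_f = \Delta + L_{\nabla f}$. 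Since $(M,g)$ is K\"ahler, the unweighted $\Delta$ already preserves bidegree, so for $\omega \in A^{p,q}(M)$ the form $\Delta\omega$ stays in $A^{p,q}(M)$; hence $\Delta_f$ preserves $A^{p,q}(M)$ if and only if $L_{\nabla f}$ does. This reduces the proposition to understanding when $L_{\nabla f}$ is type-preserving.

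Next I would decompose $L_X$ for $X = \nabla f$ by bidegree. Writing $d = \partial + \bar\partial$ and $X = X' + X''$ with $X' = X^{1,0}$ and $X'' = X^{0,1} = \overline{X'}$, the contraction splits as $\iota_X = \iota_{X'} + \iota_{X''}$, where $\iota_{X'}$ has bidegree $(-1,0)$ and $\iota_{X''}$ has bidegree $(0,-1)$. Collecting the terms of $L_X = d\iota_X + \iota_X d$ by bidegree produces exactly three pieces: a type-preserving $(0,0)$ part, the part $D := \bar\partial\iota_{X'} + \iota_{X'}\bar\partial$ of bidegree $(-1,1)$, and its conjugate $\bar D := \partial\iota_{X''} + \iota_{X''}\partial$ of bidegree $(1,-1)$. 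Thus $L_X$ preserves $A^{p,q}(M)$ precisely when both $D$ and $\bar D$ annihilate $A^{p,q}(M)$.

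Then I would identify the operator $D = [\bar\partial, \iota_{X'}]$. Being the graded commutator of two odd derivations it is an even derivation of the form algebra; a direct check shows it kills functions and $(0,1)$-forms and sends $dz^{i} \mapsto (\partial_{\bar k}X'^{\,i})\,d\bar z^{k}$ in local holomorphic coordinates. In particular it is $C^{\infty}(M)$-linear, i.e. tensorial, with defining tensor $\bar\partial X'$. Hence $D \equiv 0$ is equivalent to $\bar\partial X' = 0$, which says the $(1,0)$ field $X' = (\nabla f)^{1,0}$ is holomorphic, that is, $\nabla f$ is real-holomorphic; the operator $\bar D$ then vanishes simultaneously by conjugation. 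This already gives the easy direction: if $\nabla f$ is real-holomorphic then $D = \bar D = 0$, so $L_{\nabla f}$, and hence $\Delta_f$, preserves every $A^{p,q}(M)$.

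The subtler direction, which I expect to be the main obstacle, is to recover $\bar\partial X' = 0$ from the vanishing of $D$ and $\bar D$ on a single fixed type $A^{p,q}(M)$ with $0 < p+q < 2m$. Here the degree restriction becomes essential: $D$ maps $A^{p,q}(M)$ to $A^{p-1,q+1}(M)$ and genuinely detects $\bar\partial X'$ only when $p \ge 1$ and $q \le m-1$, while $\bar D$ detects it only when $p \le m-1$ and $q \ge 1$. I would verify by a short case analysis that $0 < p+q < 2m$ forces at least one of these two situations to occur, and that on the corresponding space the vanishing of the relevant operator propagates, with no accidental cancellation among the coordinate monomials, to every component $\partial_{\bar k}X'^{\,i}$. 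For the excluded extreme types $(0,0)$ and $(m,m)$ both $D$ and $\bar D$ act as zero purely for degree reasons, which is exactly why preservation there carries no information and the stated equivalence can fail; confirming this dichotomy rigorously is the crux of the argument.
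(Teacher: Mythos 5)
Your proposal is correct and follows essentially the same route as the paper: reduce via $\Delta_f=\Delta+\mathcal{L}_{\nabla f}$ and the K\"ahler fact that $\Delta$ preserves bidegree, then isolate the bidegree $(-1,1)$ and $(1,-1)$ pieces of the Lie derivative (the paper's $S_1,S_2$), which are tensorial with defining tensors $f_{\bar j\bar k}$ and $f_{jk}$, i.e.\ $\bar\partial\bigl(\nabla^{1,0}f\bigr)$ and its conjugate. The only difference is that your final case analysis in $(p,q)$ for the converse direction is spelled out more carefully than in the paper, which simply asserts that $S(\omega)=0$ for all $\omega\in A^{p,q}(M)$ forces $f_{jk}=f_{\bar j\bar k}=0$.
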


Here, $\nabla f$ is said to be real holomorphic if its $(1,0)$-part is a
holomorphic vector field. In terms of local complex coordinates $%
\{z_{1},z_{2},\cdots ,z_{m}\},$ that means that the complex vector field

\begin{equation*}
X:=g^{i\bar{k}}\frac{\partial f}{\partial \bar{z}^{k}}\frac{\partial } {%
\partial z^{i}}
\end{equation*}%
is holomorphic. This is equivalent to $f_{kl}=0$ for all $k,l\in \left\{
1,2,..,m\right\}$, in any local unitary frame $\left\{ v_{k}\right\}
_{k=1,..,m}.$ An alternative characterization is that $J\left( \nabla
f\right) $ is a Killing vector field.

There are quite a few important classes of K\"{a}hler manifolds admitting a
function with real holomorphic gradient vector field. One notable class is
the gradient K\"{a}hler-Ricci solitons. Recall that a manifold $\left(
M,g\right) $ is a gradient Ricci soliton if there exists a function $f\in
C^{\infty }\left( M\right) $ such that its Ricci curvature and the Hessian
of $f$ satisfy $\mathrm{Ric}+\mathrm{Hess}\left( f\right) =\lambda \,g,$ for
some $\lambda \in \mathbb{R}.$ Since $\left( M,g\right) $ is K\"{a}hler, the
equation can be expressed into $R_{i\bar{j}}+f_{i\bar{j}}=\lambda \,g_{i\bar{%
j}}$ and $f_{ij}=0$ under unitary frames. In particular, $\nabla f$ is a
real holomorphic vector field. Another important class arises from Calabi's
extremal K\"{a}hler metric \cite{Ca}. On a compact K\"{a}hler manifold $%
(N,g_{0}),$ one considers the following functional over the fixed K\"{a}hler
class determined by $g_{0}$

\begin{equation*}
F(g)=\int_{N}s^{2}(g)\,dv_{g},
\end{equation*}%
where $s(g)$ and $dv_{g}$ are the scalar curvature and the volume form of
metric $g,$ respectively. A critical point of this functional is called an
extremal metric. It is shown by Calabi \cite{Ca} that a metric $g$ is
extremal if and only if $\nabla s(g)$ is real holomorphic. The last class we
mention comes from eigenvalue estimates \cite{Ud, Ur}. For a compact K\"{a}%
hler manifold with Ricci curvature bounded below by a positive constant $k,$
it says that the gradient vector field of the corresponding eigenfunction
must be real holomorphic if the first nonzero eigenvalue achieves its
optimal lower bound $2k.$ In these examples, the existence of a real
holomorphic vector field is required in the study of some important
geometric questions. In \cite{G}, the existence of a function whose gradient
is real holomorphic was also needed for obtaining the strong
hypercontractivity of the weighted Laplacian. Inspired by this important
work of Gross, a complete description of possible functions with this
property on the complex hyperbolic space was obtained in \cite{GQ}.

We now briefly mention some previous results concerning manifolds admitting
real holomorphic vector fields. In an influential paper \cite{F}, Frankel
has shown that a one-parameter group of isometries acting on a K\"ahler
manifold $M$ must be Hamiltonian, i.e., induced by a Killing vector field of
the form $J(\nabla f)$ for some function $f,$ if $M$ is simply connected or
the action has nonempty fixed point set $Z$. Moreover, the Betti numbers of $%
M$ can be computed from those of the fixed point set $Z.$ Later, in \cite{CL}%
, it was shown that the Dolbeault cohomology $H^{p,q}(M)=0$ for $|p-q|>
\dim_{\mathbb{C}} Z.$ More recently, our studies (\cite{MW1, MW2}) show that
the existence of a smooth function $f$ such that $\nabla f$ is real
holomorphic has important implications on the function theory of the
manifold. In particular, it leads to various Liouville theorems for
holomorphic or, more generally, harmonic functions on $M.$ The interesting
feature is that no curvature assumption is involved.

Here, we continue our investigation of manifolds with a real holomorphic
vector field. First, we will use Proposition \ref{pq} to prove the following
Liouville theorem.

\begin{theorem}
\label{funct}Let $\left( M,g\right) $ be complete K\"{a}hler manifold and
suppose there exists $f\in C^{\infty }\left( M\right) $ so that $\nabla f$
is real holomorphic. Suppose that $u$ is an $f$-harmonic function on $M$
with finite total weighted energy $\int_{M}\left\vert \nabla u\right\vert
^{2}e^{-f}<\infty .$ Then $u$ is pluriharmonic. If, in addition, $f$ is
proper, then $u$ is constant on $M$.
\end{theorem}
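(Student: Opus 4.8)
The plan is to extract pluriharmonicity from Proposition~\ref{pq} by showing that the $(1,0)$-part of $du$ is a closed form. Since $u$ is $f$-harmonic we have $\Delta_f u=0$, and because $\Delta_f$ commutes with $d$ (using $d^2=0$) the $1$-form $du$ is $\Delta_f$-harmonic, with finite weighted energy $\int_M|du|^2e^{-f}=\int_M|\nabla u|^2e^{-f}<\infty$. As $\nabla f$ is real holomorphic, Proposition~\ref{pq} tells us that $\Delta_f$ preserves the bidegree, so applying it to the splitting $du=\partial u+\bar\partial u$ shows that each type-component is separately $\Delta_f$-harmonic; in particular $\partial u$ is a $\Delta_f$-harmonic $(1,0)$-form of finite weighted energy. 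I would then argue that such a form must be $d$-closed. Granting this, the identity $d(\partial u)=\bar\partial\partial u=-\partial\bar\partial u$ turns $d(\partial u)=0$ into exactly $\partial\bar\partial u=0$, i.e. $u$ is pluriharmonic.

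To see that $\partial u$ is closed, set $\beta=\partial u$ and pair the equation $\Delta_f\beta=0$ with $\beta$. Formally,
\[
0=\int_M\langle\Delta_f\beta,\beta\rangle\,e^{-f}=\int_M\left(|d\beta|^2+|d_f^{\ast}\beta|^2\right)e^{-f},
\]
which forces $d\beta=0$. The main obstacle is that $M$ is only assumed complete, not compact, so this integration by parts is not automatic. One must run a cutoff argument of Gaffney--Yau type, testing against $\langle\Delta_f\beta,\beta\rangle\,\phi^2$ for a sequence of compactly supported $\phi\uparrow1$ with $|\nabla\phi|\to0$, absorbing the cross terms involving $\nabla\phi$ by Cauchy--Schwarz, and using the finite energy $\beta\in L^2(e^{-f})$ to send the error $\int|\nabla\phi|^2|\beta|^2e^{-f}$ to zero. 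Carrying out this limiting procedure so that the boundary contributions genuinely vanish is the delicate part; the rest is formal and curvature-free, since the vanishing of the $(0,2)$-curvature on a K\"ahler manifold is already built into Proposition~\ref{pq}.

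For the final assertion, suppose in addition that $f$ is proper. Pluriharmonicity gives $u_{i\bar j}=0$, hence $\Delta u=2\,g^{i\bar j}u_{i\bar j}=0$, so $u$ is harmonic in the ordinary sense. Comparing with $\Delta_f u=\Delta u-\langle\nabla f,\nabla u\rangle=0$ yields the key identity $\langle\nabla f,\nabla u\rangle=0$. I would then use the divergence identity
\[
\operatorname{div}\!\left(e^{-f}u\,\nabla u\right)=e^{-f}\left(|\nabla u|^2-u\,\langle\nabla f,\nabla u\rangle\right)+e^{-f}u\,\Delta u=e^{-f}|\nabla u|^2,
\]
where both correction terms drop out by harmonicity and by $\langle\nabla f,\nabla u\rangle=0$. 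Integrating over the sublevel set $\{f<R\}$, which is compact by properness and has smooth boundary $\{f=R\}$ for almost every $R$, the divergence theorem converts the left side into a boundary flux whose integrand is proportional to $\langle\nabla u,\nu\rangle=|\nabla f|^{-1}\langle\nabla u,\nabla f\rangle=0$. Hence $\int_{\{f<R\}}|\nabla u|^2e^{-f}=0$ for all such $R$, and letting $R\to\infty$ forces $\nabla u\equiv0$, so $u$ is constant. This step uses properness precisely to furnish exhausting compact domains across whose boundaries the flux vanishes, and again needs no curvature hypothesis.
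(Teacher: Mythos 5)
Your proposal is correct and follows essentially the same route as the paper: $f$-harmonicity of $du$, bidegree preservation from Proposition~\ref{pq} to isolate $\partial u$, a cutoff/Caccioppoli argument (whose error term $\int_M|\beta|^2|\nabla\phi|^2e^{-f}\leq R^{-2}\int_M|\beta|^2e^{-f}\to 0$ under finite energy, exactly as you indicate) to get $d(\partial u)=d_f^{\ast}(\partial u)=0$, and integration over sublevel sets of $f$ with vanishing flux since $\langle\nabla u,\nabla f\rangle=0$. The only difference is that the paper runs the cutoff argument in two passes (first establishing $\int_M|d\theta|^2e^{-f}<\infty$, then an annulus estimate) because it proves the stronger version allowing energy growth $O(R^2)$, for which your single-pass limit would not suffice; for the finite-energy statement as posed, your argument is complete.
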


Theorem \ref{funct} was first established in \cite{MW2} under a growth
assumption on $f.$ It was used there to show that shrinking gradient K\"{a}%
hler-Ricci solitons must be connected at infinity. Our approach here enables
us to remove this extra assumption.

In view of Theorem \ref{funct}, it is natural to investigate the more
general situation of harmonic maps between K\"{a}hler manifolds. We will
show that the existence of a real holomorphic vector field on $M$ implies
analogous results for harmonic maps from $M$ to another manifold $N$ with
negative curvature in a suitable sense. As is well known (see Schoen and Yau 
\cite{SY}), this leads to topological information of manifold $M.$ More
precisely, we have the following result.

\begin{theorem}
\label{maps}Let $\left( M,g\right) $ be a complete K\"{a}hler manifold with
a real holomorphic vector field $\nabla f$ for some $f\in C^{\infty }\left(
M\right) .$ Assume in addition that there exists an isolated minimum point $%
x_{0}\in M$ for $f.$ Then any $f-$harmonic map $u:M\rightarrow N$ of finite
total weighted energy into a K\"{a}hler manifold with strongly seminegative
curvature must be constant.
\end{theorem}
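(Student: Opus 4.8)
The plan is to follow the same two-stage strategy that underlies Theorem \ref{funct}: first show that a finite-energy $f$-harmonic map into a strongly seminegatively curved target is forced to be pluriharmonic, and then exploit pluriharmonicity together with the isolated minimum of $f$ to conclude that $u$ is constant. Throughout, the hypothesis that $\nabla f$ is real holomorphic (so that Proposition \ref{pq} applies) is what makes the weighted computations close up, exactly as in the function case.

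For the first stage, recall that $u$ being $f$-harmonic means its weighted tension field vanishes, $\tau_f(u):=\tau(u)-du(\nabla f)=0$. I would derive a weighted analogue of the Siu--Sampson Bochner formula for the $(1,0)$-part $\partial' u$ of the differential, viewed as a section of $T^{*(1,0)}M\otimes u^{*}T_{\mathbb C}N$. Applying $\sqrt{-1}\,\partial\bar\partial$ to the partial energy density and integrating against the weight $e^{-f}$, the real-holomorphicity of $\nabla f$ (through the type-preserving property of $\Delta_f$ in Proposition \ref{pq}) ensures that the first-order terms in $f$ organize into an exact weighted divergence, leaving an identity of the schematic form
\[
\int_{M}\bigl(|\nabla''\partial' u|^{2}-Q(u)\bigr)\,e^{-f}=0,
\]
where $Q(u)$ is the curvature term of the target. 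The strong seminegativity of $N$ guarantees $Q(u)\le 0$, so both summands carry the same sign and must vanish identically. To make this legitimate on a complete, possibly noncompact $M$, I would insert cutoff functions and absorb the resulting boundary contributions using the finiteness of the total weighted energy $\int_M|du|^2e^{-f}$, precisely as in the proof of Theorem \ref{funct}. The upshot is $\nabla''\partial' u=0$, i.e. the $(1,1)$-part of the Hessian $\nabla du$ vanishes, so $u$ is pluriharmonic. In particular, tracing with the K\"ahler metric gives $\tau(u)=0$, so $u$ is in fact harmonic.

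The second stage is where the isolated minimum enters. Since $u$ is now harmonic, the $f$-harmonic equation collapses to
\[
du(\nabla f)=\tau(u)-\tau_f(u)=0,
\]
meaning $u$ is constant along every integral curve of $\nabla f$. Near the isolated minimum $x_0$ the function $f$ is a strict Lyapunov function for the negative gradient flow, so there is a neighborhood $U$ of $x_0$ on which that flow converges to $x_0$; since $u$ is constant along these flow lines and continuous, $u\equiv u(x_0)$ on $U$.

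Finally, a harmonic map satisfies an elliptic system to which Aronszajn's unique continuation principle applies (in the harmonic-map setting this is due to Sampson), so a harmonic map that is constant on the open set $U$ must be constant on all of connected $M$. I expect the main obstacle to be the first stage: deriving the correct weighted Siu--Sampson identity and, above all, justifying the integration by parts on a noncompact complete manifold by a cutoff argument that genuinely uses only the finite weighted energy, while keeping the target curvature term on the favorable side through strong seminegativity.
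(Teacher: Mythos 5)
Your proposal is correct in outline, and its first stage coincides with the paper's own: the paper's Lemma \ref{pluriharmonic} is precisely the weighted Siu-type integration-by-parts argument you sketch, with cutoffs and finite weighted energy absorbing the boundary contributions and strong seminegativity of $N$ giving the good sign of the curvature term. One caution there: the first-order $f$-terms do not simply organize into an exact weighted divergence; in the paper they survive as the quadratic quantity $-\frac{1}{4}\int_M\left\vert u_j^a f_{\bar j}-u_{\bar k}^a f_k\right\vert^2\phi^2 e^{-f}$, which fortunately enters with a favorable sign and absorbs the cross terms coming from the cutoff, and the identities $f_{jk}=f_{\bar j \bar k}=0$ are used repeatedly in the integrations by parts. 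The output is the same as yours: $D\bar\partial u=0$, hence $\tau(u)=0$ and $i(\nabla f)du=0$. Where you genuinely diverge is the second stage. The paper proves a purely local Riemannian lemma (Lemma \ref{local}): it integrates $g^{kj}h_{ab}u_k^a u_j^b$ by parts over a small sublevel set $D(\varepsilon)=\{f\le\varepsilon\}\cap B_{x_0}(\delta)$, where the boundary term vanishes because the unit normal is $\nabla f/\left\vert\nabla f\right\vert$ and $i(\nabla f)du=0$, and where normal coordinates at $u(x_0)$ make the interior terms bounded by $C\eta\int_{D(\varepsilon)}\left\vert du\right\vert^2$; taking $\varepsilon$ small forces $du\equiv0$ near $x_0$. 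You instead observe that $u$ is a first integral of the negative gradient flow of $f$ and that this flow converges to $x_0$ from an entire neighborhood, so $u\equiv u(x_0)$ there; both routes then finish with Sampson--Aronszajn unique continuation. Your dynamical argument is softer (no integral estimate, no control of the target metric), but note that asymptotic stability of $x_0$ via LaSalle's principle requires $\nabla f\neq 0$ on a punctured neighborhood, i.e.\ that $x_0$ be isolated as a \emph{critical point}, not merely as a minimum point: if other critical points accumulated at $x_0$, flow lines could limit onto them and your basin-of-attraction step would fail, whereas the paper's integral argument needs only regular values $\varepsilon\to0$ (available by Sard's theorem). Since the paper itself states the hypothesis in this form (Theorem \ref{maps_proof} reads ``$f$ achieves its minimum at an isolated critical point''), your reading is consistent with the intended one, and your second stage is a legitimate, arguably more transparent, alternative to Lemma \ref{local}.
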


We recall after \cite{S} that the curvature $K_{a\bar{b}c\bar{d}}$ of a K%
\"{a}hler manifold is strongly seminegative if 
\begin{equation*}
K_{a\bar{b}c\bar{d}}\left( A^{a}\overline{B^{b}}-C^{a}\overline{D^{b}}%
\right) \left( \overline{A^{d}\overline{B^{c}}-C^{d}\overline{D^{c}}}\right)
\geq 0,
\end{equation*}%
for all complex numbers $A^{a},B^{a},C^{a},D^{a}.$ We remark that no
assumption on the curvature of $M$ or the growth of $f$ is involved in the
theorem. 

The assumption that $f$ has an isolated minimum point is indeed necessary.
To see this, consider a K\"{a}hler manifold $N$ and let $M=N\times \mathbb{C}
$. The function $f$ is taken to be constant on $N$ and $\left\vert
z\right\vert ^{2}$ on $\mathbb{C}$, so $\nabla f$ is clearly real
holomorphic. Obviously, the projection map $\pi :M\rightarrow N$ is a
nonconstant weighted harmonic map from $M$ to $N.$ 

Examples of manifolds verifying the assumptions of Theorem \ref{maps}
are abundant. They include steady K\"{a}hler Ricci solitons with positive Ricci curvature 
and scalar curvature going to zero at infinity as the potential function is strictly convex
and attains its minimum value at its only critical point (see \cite{CC}). 
Such solitons have been constructed in \cite{C}. They also include the complex projective
spaces and complex hyperbolic spaces studied in \cite{GQ}. For
example, on the unit ball model of the complex hyperbolic space $\mathbb{CH}%
^{m}$ with K\"{a}hler form $\omega =-\partial \bar{\partial}\log \left(
1-\left\vert z\right\vert ^{2}\right) ,$ the weight $f\left( z\right) =\frac{%
1}{1-\left\vert z\right\vert ^{2}}$ obviously has real holomorphic gradient and an
isolated minimum at $z=0$.

As a consequence of Theorem \ref{maps} we get the following result
concerning the fundamental group of such manifolds.

\begin{corollary}
Let $\left( M,g\right) $ be a compact K\"{a}hler manifold and assume there
exists $f$ which satisfies the assumptions in Theorem \ref{maps}. Then there
is no non-trivial homomorphism from $\pi _{1}\left( M\right) $ into that of
a compact K\"{a}hler manifold with strongly seminegative curvature.
\end{corollary}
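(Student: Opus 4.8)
The plan is to argue by contradiction, realizing a hypothetical nontrivial homomorphism by an $f$-harmonic map and then invoking Theorem \ref{maps} to force that map, and hence the homomorphism, to be trivial. So suppose $N$ is a compact K\"ahler manifold with strongly seminegative curvature and that $\rho\colon\pi_1(M)\to\pi_1(N)$ is a nontrivial homomorphism. The first point I would record is that $N$ is aspherical: strongly seminegative curvature implies that the Riemannian sectional curvature of $N$ is nonpositive, so by the Cartan--Hadamard theorem its universal cover is diffeomorphic to a Euclidean space and in particular contractible. Thus $N$ is a $K(\pi_1(N),1)$, and standard obstruction theory produces a continuous map $\phi\colon M\to N$ with $\phi_\ast=\rho$, unique up to free homotopy.

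Next I would replace $\phi$ by an $f$-harmonic representative in its homotopy class. Since $M$ is compact, the weight $e^{-f}$ is bounded between two positive constants, so the weighted energy
\begin{equation*}
E_f(v)=\int_M|\nabla v|^2\,e^{-f}
\end{equation*}
is finite for every smooth map $v$ and comparable to the ordinary energy; its Euler--Lagrange equation is the $f$-harmonic map equation $\tau(v)-dv(\nabla f)=0$, where $\tau$ is the usual tension field. Because the target $N$ has nonpositive curvature, the Eells--Sampson theory carries over to the weighted (drift) Laplacian: the gradient flow $\partial_t v=\tau(v)-dv(\nabla f)$ exists for all time and subconverges to a smooth $f$-harmonic map $u\colon M\to N$ homotopic to $\phi$. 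The only curvature-sensitive term in the Bochner formula for the energy density is that of the target, which has the favorable sign, and the drift term $dv(\nabla f)$ is lower order, so the a priori estimates and convergence go through as in the unweighted case. In particular $u_\ast=\phi_\ast=\rho$, and $u$ has finite total weighted energy by compactness of $M$.

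Finally I would apply Theorem \ref{maps}. By hypothesis $f$ has an isolated minimum point and $\nabla f$ is real holomorphic, so the theorem applies to the finite-energy $f$-harmonic map $u\colon M\to N$ into the strongly seminegatively curved K\"ahler manifold $N$ and forces $u$ to be constant. A constant map induces the trivial homomorphism on fundamental groups, whence $\rho=u_\ast$ is trivial, contradicting our choice of $\rho$. Therefore no nontrivial homomorphism $\pi_1(M)\to\pi_1(N)$ can exist.

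I expect the genuine content to lie in the existence step of the second paragraph, namely adapting the Eells--Sampson existence theorem to the drift Laplacian and checking that the resulting $f$-harmonic map stays in the prescribed homotopy class; this is where compactness of $M$ and nonpositivity of the target curvature are actually used. On a compact manifold this is essentially routine, since the weight merely rescales the measure and the drift term is a harmless lower-order perturbation. The remaining steps are formal: asphericity of $N$ to realize $\rho$ by a map, and Theorem \ref{maps} to collapse that map to a point.
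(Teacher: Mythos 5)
Your proof is correct and follows exactly the route the paper intends: the corollary is stated as a direct consequence of Theorem \ref{maps} via the Schoen--Yau method \cite{SY} (realize the homomorphism by a continuous map into the aspherical target, deform it to a harmonic representative, then invoke the constancy theorem), and the paper supplies no further details. The one step that genuinely needs justification is the one you flag yourself: existence of an $f$-harmonic map in the homotopy class. Your claim that the weighted Eells--Sampson theory ``goes through'' is true but should not be left as an assertion; the cleanest fix is either to cite the $f$-harmonic map literature \cite{HLX, RV}, or to use the warped-product reduction: $f$-harmonic maps from $M$ are precisely the $S^{1}$-invariant harmonic maps from $M\times S^{1}$ with metric $g+e^{-2f}d\theta ^{2}$, so the classical Eells--Sampson theorem (whose hypotheses hold since the target has nonpositive sectional curvature) produces the required representative, and uniqueness of the flow preserves $S^{1}$-invariance. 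Alternatively, you can bypass weighted existence theory altogether by using the theorem the paper proves at the end of Section 2: any \emph{ordinary} harmonic map from a compact K\"{a}hler $M$ carrying such an $f$ into a K\"{a}hler manifold of strongly seminegative curvature is constant. Combining that statement with the classical (unweighted) Eells--Sampson theorem gives the corollary without ever producing an $f$-harmonic map, which is arguably the most economical way to close the gap you identified.
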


$f-$harmonic maps have been well studied in the literature, as they are
natural objects in the presence of a smooth measure on a manifold. The
interested reader may consult \cite{HLX, RV} for some recent progress and a
more extensive reference list.

Finally, in the last part of the paper, we prove a vanishing theorem for
holomorphic forms. This result does not seem to follow from the previous work of 
\cite{CL, F, H, W} even in the compact case as we impose no assumption on
the size of the critical point set of $f.$

\begin{theorem}
Let $\left( M,g\right) $ be a complete K\"{a}hler manifold with a bounded,
real holomorphic vector field $\nabla f$ for some $f\in C^{\infty }\left(
M\right) .$ Assume in addition that there exists an isolated minimum point $%
x_{0}\in M$ for $f.$ Then, for any $p\geq 0,$ all $L^{2}$ holomorphic $(p,0)$%
-forms on $M$ must be zero.
\end{theorem}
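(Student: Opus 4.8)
The plan is to reduce the statement to a single first-order identity for holomorphic forms and then read off the vanishing from the local dynamics of the vector field at the isolated minimum. Let $\omega$ be an $L^{2}$ holomorphic $(p,0)$-form, so that $\bar{\partial}\omega=0$; since $\omega$ has type $(p,0)$, the weighted adjoint satisfies $\bar{\partial}_{f}^{\ast}\omega=0$ as well, so $\omega$ is harmonic for the weighted $\bar{\partial}$-Laplacian $\Box_{\bar{\partial},f}$. Because $\nabla f$ is real holomorphic, Proposition \ref{pq} guarantees that the type-changing cross terms in $\Delta_{f}$ drop out, so $\Delta_{f}=\Box_{\partial,f}+\Box_{\bar{\partial},f}$ and hence $\Delta_{f}\omega=\Box_{\partial,f}\omega$. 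Writing $\partial_{f}^{\ast}=\partial^{\ast}+\iota_{X}$ for the holomorphic field $X:=\nabla^{1,0}f$ of the introduction, using that the unweighted $\Box_{\partial}\omega=0$ by the Kähler identities, and noting that Cartan's formula gives $(\partial\iota_{X}+\iota_{X}\partial)\omega=\mathcal{L}_{X}\omega$ (the $\bar{\partial}$-contributions vanishing because $\omega$ and $\iota_{X}\omega$ are $\bar{\partial}$-closed), I obtain the clean, curvature-free identity
\begin{equation*}
\Delta_{f}\omega=\mathcal{L}_{X}\omega .
\end{equation*}
Verifying this identity carefully is the backbone of the argument, and it is precisely its curvature-free nature that lets one avoid any hypothesis on the curvature of $M$.

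The endgame, which I would set up first so as to see which hypotheses are needed, is the local model at $x_{0}$ under the assumption $\mathcal{L}_{X}\omega=0$. The field $J(\nabla f)$ is a Killing field whose zero set is the critical set of $f$, so an isolated minimum is an isolated zero, the linearization at $x_{0}$ is a nonsingular skew-Hermitian endomorphism, and therefore the Hermitian Hessian $H=(f_{i\bar{j}})(x_{0})$, positive semidefinite at a minimum, is in fact positive definite. In holomorphic coordinates centered at $x_{0}$ one then has $X=\sum_{j}\lambda_{j}z_{j}\,\partial_{z_{j}}+O(|z|^{2})$ with all weights $\lambda_{j}>0$ and $\mathcal{L}_{X}(dz^{I})=\Lambda_{I}\,dz^{I}$, where $\Lambda_{I}=\sum_{k\in I}\lambda_{k}>0$ whenever $|I|=p\geq 1$. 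Writing $\omega=\sum_{I}\omega_{I}\,dz^{I}$ with holomorphic coefficients, $\mathcal{L}_{X}\omega=0$ reads $X\omega_{I}=-\Lambda_{I}\omega_{I}$; comparing lowest-order Taylor coefficients and using that $X$ acts on a monomial $z^{\alpha}$ with eigenvalue $\langle\lambda,\alpha\rangle\geq 0$, while $-\Lambda_{I}<0$, forces every coefficient to vanish. Thus $\mathcal{L}_{X}\omega=0$ already gives $\omega\equiv 0$ near $x_{0}$, hence everywhere by unique continuation.

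It remains to promote the identity $\Delta_{f}\omega=\mathcal{L}_{X}\omega$ to $\mathcal{L}_{X}\omega=0$, and this is the step I expect to be the main obstacle. The mechanism I would use is dynamical rather than integral: since $\nabla f$ is bounded and $M$ is complete, $X$ is a complete vector field, and the identity says exactly that the pullbacks $\Psi_{-t}^{\ast}\omega$ along its flow solve the weighted heat equation with datum $\omega$, so that $\Psi_{-t}^{\ast}\omega=e^{-t\Delta_{f}}\omega$. The real part of this flow is the gradient flow of $f$, which contracts toward $x_{0}$ and scales $(p,0)$-forms by $e^{-t\Lambda_{I}}$ with $\Lambda_{I}>0$, while the imaginary part consists of the isometries generated by $J(\nabla f)$ and preserves the weighted $L^{2}$-norm. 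One therefore expects $e^{-t\Delta_{f}}\omega\to 0$, and together with the spectral behaviour of the nonnegative self-adjoint operator $\Delta_{f}$ this should force the spectral measure of $\omega$ to concentrate at $0$, i.e.\ $\Delta_{f}\omega=0$ and hence $\mathcal{L}_{X}\omega=0$.

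The delicate point is the global $L^{2}$-control of the pullbacks $\Psi_{\pm t}^{\ast}\omega$ away from $x_{0}$: one must justify the identification with the heat semigroup through uniqueness of bounded $L^{2}$ solutions on the complete manifold, and must rule out growth in the non-compact region defeating the contraction near $x_{0}$. Here both remaining hypotheses enter: boundedness of $X=\nabla^{1,0}f$ ensures completeness of the flow and the bound $\|\iota_{X}\omega\|_{f}\leq C\|\omega\|_{f}$, and $f$ increases along the ascending flow, which is what I would exploit to keep the relevant norms finite; finiteness of the weighted energy then closes the estimate. For $p=0$ the weight $\Lambda_{I}$ degenerates and this scheme only yields that $\omega$ is constant, which must then vanish in the situations where a nonzero constant fails to be $L^{2}$.
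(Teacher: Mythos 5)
Your opening identity and your endgame are both essentially correct: for a holomorphic $(p,0)$-form one has $\bar{\partial}_{f}^{\ast }\omega =0$ and $\Delta \omega =0$, hence $\Delta _{f}\omega =\mathcal{L}_{X}\omega =\mathcal{L}_{\nabla f}\omega $ (note the paper's Proposition actually gives $\Delta _{f}=\Delta _{f}^{\partial }+\Delta _{f}^{\bar{\partial}}-\Delta $, not the sum of the two weighted Laplacians, but the discrepancy $\Delta $ annihilates $\omega $ anyway); and an isolated zero of the Killing field $J\left( \nabla f\right) $ does force $\left( f_{i\bar{j}}\right) \left( x_{0}\right) $ to be positive definite, so that $\mathcal{L}_{X}\omega =0$ kills every Taylor coefficient of $\omega $ at $x_{0}$ when $p\geq 1$. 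The genuine gap --- which you flag yourself --- is that nothing in the proposal proves $\mathcal{L}_{X}\omega =0$, and the dynamical mechanism you outline cannot be closed under the stated hypotheses. Concretely: (i) to identify $\Psi _{-t}^{\ast }\omega $ with $e^{-t\Delta _{f}}\omega $ by $L^{2}$-uniqueness you must first show the pullbacks stay in the weighted $L^{2}$ space; but $d\Psi _{-t}$ solves a linear ODE along trajectories whose coefficient is $\mathrm{Hess}\left( f\right) $, and boundedness of $\nabla f$ gives completeness of the flow while giving no control whatsoever on $\mathrm{Hess}\left( f\right) $, so $\Vert \Psi _{-t}^{\ast }\omega \Vert $ is simply not under control. (ii) Even granting the identification, your spectral logic points the wrong way: strong decay $e^{-t\Delta _{f}}\omega \rightarrow 0$ would say $\omega $ is \emph{orthogonal} to $\ker \Delta _{f}$, whereas ``spectral measure concentrated at $0$'' says $\omega \in \ker \Delta _{f}$; these are opposite statements, and pointwise contraction near the single point $x_{0}$ yields neither. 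To conclude $\Delta _{f}\omega =0$ along these lines you would need the backward pullbacks $\Psi _{t}^{\ast }\omega $ to remain in weighted $L^{2}$ with subexponential norm growth --- again a Hessian-type estimate unavailable from the hypotheses.

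The missing step, however, has an elementary proof, and it is exactly how the paper proceeds: induction on $p$. The contraction $\theta :=i\left( \nabla f\right) \omega $ is a holomorphic $(p-1,0)$-form, and it is $L^{2}$ precisely because $\nabla f$ is bounded --- this is the only place that hypothesis enters. By the induction hypothesis $\theta =0$. Since an $L^{2}$ holomorphic form on a complete K\"{a}hler manifold is harmonic, hence closed and co-closed by Gaffney's theorem, this already gives $\mathcal{L}_{\nabla f}\omega =d\,i\left( \nabla f\right) \omega +i\left( \nabla f\right) d\omega =0$, which is all your local endgame needs. (The paper finishes differently: it writes $\omega =\partial \eta $ on a coordinate chart near $x_{0}$ and integrates by parts over $\left\{ f\leq \varepsilon \right\} $, where co-closedness and $i\left( \nabla f\right) \omega =0$ kill the interior and boundary terms respectively; that route does not even need nondegeneracy of the Hessian at $x_{0}$.) So your outline becomes a complete proof once the heat-semigroup step is replaced by this induction; as written, the central claim remains unestablished.
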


This theorem implies that a compact K\"{a}hler manifold admitting such a
function has first Betti number equal to zero. It would be interesting to
infer some information about higher Betti numbers, under the same
assumptions.

\section{The weighted Laplacian and forms}

In this section, we prove Theorem \ref{funct}. We begin by setting up the
notations. First, to be consistent with our notation in previous works, given 
$ds^{2}:=g_{k\bar{j}}dz^{k}d\bar{z}^{j}$ a K\"{a}hler metric on $M,$ the
Riemannian metric that we consider is $4\mathrm{Re}\left( ds^{2}\right) .$
So, with respect to this Riemannian metric, we have%
\begin{equation*}
\left\vert \nabla u\right\vert ^{2}=g^{k\bar{j}}u_{k}u_{\bar{j}}\text{ \ \
and }\Delta u=g^{k\bar{j}}u_{k\bar{j}}.
\end{equation*}%
Any $\omega \in A^{p,q}\left( M\right) $ will be written locally as 
\begin{equation*}
\omega =\frac{1}{p!q!}\omega _{I\bar{J}}dz^{I}\wedge d\bar{z}^{J},
\end{equation*}%
where $\left\vert I\right\vert =p$ and $\left\vert J\right\vert =q$. On $%
A^{p,q}\left( M\right) $ we use the metric to define a Hermitian product by 
\begin{equation*}
\left\langle \omega ,\theta \right\rangle :=\frac{1}{2^{p+q}}\frac{1}{p!q!}%
g^{I\bar{K}}g^{L\bar{J}}\omega _{I\bar{J}}\overline{\theta _{K\bar{L}}}.
\end{equation*}%
The differential $d:\Omega ^{p}\left( M\right) \rightarrow \Omega
^{p+1}\left( M\right) $ acting on $p$ forms on $M$, given by 
\begin{equation*}
d\omega =dx^{k}\wedge \nabla _{\frac{\partial }{\partial x^{k}}}\omega ,
\end{equation*}%
is decomposed as $d=\partial +\bar{\partial},$ where $\partial
:A^{p,q}\left( M\right) \rightarrow A^{p+1,q}\left( M\right) $ and $\bar{%
\partial}:A^{p,q}\left( M\right) \rightarrow A^{p,q+1}\left( M\right) $ are
given by 
\begin{eqnarray*}
\partial \omega &=&dz^{k}\wedge \nabla _{\partial _{k}}\omega \\
\bar{\partial}\omega &=&d\bar{z}^{k}\wedge \nabla _{\bar{\partial}%
_{k}}\omega .
\end{eqnarray*}%
We start to denote $\partial _{k}:=\frac{\partial }{\partial z^{k}}$ and $%
\bar{\partial}_{k}:=\frac{\partial }{\partial \bar{z}^{k}}$. These operators
have adjoints $d^{\ast },\partial ^{\ast }$ and $\bar{\partial}^{\ast },$
respectively. We also have that $d^{\ast }=\partial ^{\ast }+$ $\bar{\partial%
}^{\ast }.$ We recall their well known formulas:%
\begin{eqnarray*}
d^{\ast } &=&-g^{\alpha \beta }\,i\left( \frac{\partial }{\partial x^{\alpha
}}\right) \nabla _{\frac{\partial }{\partial x^{\beta }}} \\
\partial ^{\ast } &=&-\frac{1}{2}g^{k\bar{j}}\,i\left( \partial _{k}\right)
\nabla _{\bar{\partial}_{j}} \\
\bar{\partial}^{\ast } &=&-\frac{1}{2}g^{k\bar{j}}\,i\left( \bar{\partial}%
_{j}\right) \nabla _{\partial _{k}}
\end{eqnarray*}

Here $i\left( X\right) \omega $ denotes the interior product of $\omega $ by 
$X,$ and $\alpha ,\beta \in \left\{ 1,..,2m\right\} $ are used to denote
real coordinate indices. From now on, we use normal complex coordinates at
the point under consideration. So $g_{i\bar{j}}=\delta _{i\bar{j}}$ and $%
\nabla g_{i\bar{j}}=0$ at the point. The Hodge Laplacian 
\begin{equation*}
\Delta :=dd^{\ast }+d^{\ast }d
\end{equation*}%
is positive and self adjoint. One can also define two other operators 
\begin{equation*}
\Delta ^{\partial }:=\partial \partial ^{\ast }+\partial ^{\ast }\partial 
\text{ \ and }\Delta ^{\bar{\partial}}:=\bar{\partial}\bar{\partial}^{\ast }+%
\bar{\partial}^{\ast }\bar{\partial},
\end{equation*}%
which map $A^{p,q}\left( M\right) $ into itself. The fact that $\left(
M,g\right) $ is K\"{a}hler implies 
\begin{equation*}
\Delta =\Delta ^{\partial }=\Delta ^{\bar{\partial}}.
\end{equation*}%
In particular, $\Delta $ preserves the space $A^{p,q}\left( M\right) $.

Now let us assume we have a weight $f\in C^{\infty }\left( M\right) ,$ which
gives us a new volume form $dv_{f}:=e^{-f}dv.$ We then have the
corresponding adjoint operators $d_{f}^{\ast },\partial _{f}^{\ast }$ and $%
\bar{\partial}_{f}^{\ast }.$ For example, 
\begin{equation*}
\int_{M}\left\langle d\omega ,\theta \right\rangle
e^{-f}=\int_{M}\left\langle \omega ,d_{f}^{\ast }\theta \right\rangle e^{-f}.
\end{equation*}%
The corresponding formulas for these operators are easy to find:%
\begin{eqnarray}
d_{f}^{\ast } &=&d^{\ast }+i\left( \nabla f\right)  \label{f1} \\
\partial _{f}^{\ast } &=&\partial ^{\ast }+i\left( \nabla ^{1,0}f\right) 
\notag \\
\bar{\partial}_{f}^{\ast } &=&\bar{\partial}^{\ast }+i\left( \nabla
^{0,1}f\right) ,  \notag
\end{eqnarray}%
where 
\begin{equation*}
\nabla ^{1,0}f:=\frac{1}{2}g^{j\bar{k}}f_{\bar{k}}\partial _{j}\ \ \ \text{%
and\ \ \ }\nabla ^{0,1}f:=\frac{1}{2}g^{j\bar{k}}f_{j}\partial _{\bar{k}}.
\end{equation*}%
Again, it holds that

\begin{equation*}
\nabla f=\nabla ^{1,0}f+\nabla ^{0,1}f\text{ \ \ and \ }d_{f}^{\ast
}=\partial _{f}^{\ast }+\bar{\partial}_{f}^{\ast }.
\end{equation*}%
From (\ref{f1}) it is easy to deduce the following formulas for the weighted
Hodge Laplacian $\Delta _{f}:=dd_{f}^{\ast }+d_{f}^{\ast }d,$ for $\Delta
_{f}^{\partial }:=\partial \partial _{f}^{\ast }+\partial _{f}^{\ast
}\partial $ \ and for $\Delta _{f}^{\bar{\partial}}:=\bar{\partial}\bar{%
\partial}_{f}^{\ast }+\bar{\partial}_{f}^{\ast }\bar{\partial}.$ 
\begin{eqnarray}
\Delta _{f} &=&\Delta +\mathcal{L}_{\nabla f}  \label{f2} \\
\Delta _{f}^{\partial } &=&\Delta ^{\partial }+\partial i\left( \nabla
^{1,0}f\right) +i\left( \nabla ^{1,0}f\right) \partial   \notag \\
\Delta _{f}^{\bar{\partial}} &=&\Delta ^{\bar{\partial}}+\bar{\partial}%
i\left( \nabla ^{0,1}f\right) +i\left( \nabla ^{0,1}f\right) \bar{\partial}.
\notag
\end{eqnarray}%
We have denoted by $\mathcal{L}$ the Lie derivative. Using (\ref{f2}), we
can state the necessary and sufficient condition on $f$ so that $\Delta _{f}$
maps $\left( p,q\right) $ forms to $\left( p,q\right) $ forms, cf. \cite{HM}%
. Since this is clearly true for functions, from now on we let $0<p+q<2m.$

\begin{proposition}
\label{pq_proof}The weighted Hodge Laplacian $\Delta _{f}$ preserves the
space of $\left( p,q\right) $ forms $A^{p,q}\left( M\right) $ if and only if 
$\nabla f$ is real holomorphic. In this case, 
\begin{equation*}
\Delta _{f}=\Delta _{f}^{\partial }+\Delta _{f}^{\bar{\partial}}-\Delta .
\end{equation*}
\end{proposition}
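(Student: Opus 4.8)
The plan is to combine Cartan's formula $\mathcal{L}_{\nabla f}=d\,i(\nabla f)+i(\nabla f)\,d$ with the expressions in (\ref{f2}) and then sort every operator by bidegree. Writing $d=\partial+\bar{\partial}$ and $\nabla f=\nabla^{1,0}f+\nabla^{0,1}f$, the Lie derivative expands into the eight products obtained from $\{\partial,\bar{\partial}\}$ and $\{i(\nabla^{1,0}f),i(\nabla^{0,1}f)\}$. Since $\partial$ and $\bar{\partial}$ raise the holomorphic and antiholomorphic degrees by one while $i(\nabla^{1,0}f)$ and $i(\nabla^{0,1}f)$ lower them by one, each product shifts the bidegree by $(0,0)$, $(+1,-1)$, or $(-1,+1)$. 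Grouping terms, the diagonal $(0,0)$ part is $\partial\,i(\nabla^{1,0}f)+i(\nabla^{1,0}f)\,\partial+\bar{\partial}\,i(\nabla^{0,1}f)+i(\nabla^{0,1}f)\,\bar{\partial}$, the $(+1,-1)$ part is the anticommutator $\{\partial,i(\nabla^{0,1}f)\}$, and the $(-1,+1)$ part is $\{\bar{\partial},i(\nabla^{1,0}f)\}$. Thus $\mathcal{L}_{\nabla f}$, and hence $\Delta_f=\Delta+\mathcal{L}_{\nabla f}$, preserves $A^{p,q}(M)$ precisely when these two off-diagonal anticommutators annihilate every form.

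Next I would observe that the diagonal part coincides, by (\ref{f2}), with $(\Delta_f^{\partial}-\Delta^{\partial})+(\Delta_f^{\bar{\partial}}-\Delta^{\bar{\partial}})$. Invoking the Kähler identity $\Delta=\Delta^{\partial}=\Delta^{\bar{\partial}}$ recorded above, it follows that whenever the off-diagonal parts vanish one gets $\Delta_f=\Delta+\mathcal{L}_{\nabla f}=\Delta_f^{\partial}+\Delta_f^{\bar{\partial}}-\Delta$, which is at once the asserted formula and a manifestly type-preserving operator (each of $\Delta_f^{\partial}$, $\Delta_f^{\bar{\partial}}$, $\Delta$ maps $A^{p,q}(M)$ to itself). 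So the formula comes for free once the off-diagonal anticommutators are controlled, and both implications of the proposition reduce to the single claim that $\{\bar{\partial},i(\nabla^{1,0}f)\}$ and its conjugate $\{\partial,i(\nabla^{0,1}f)\}$ vanish on forms if and only if $\nabla f$ is real holomorphic.

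The heart of the argument is this last computation, which I would carry out in Kähler normal coordinates at the point under consideration. Writing $\nabla^{1,0}f=W^{j}\partial_{j}$ with $W^{j}=\tfrac{1}{2}g^{j\bar{k}}f_{\bar{k}}$ and using $\bar{\partial}\alpha=d\bar{z}^{l}\wedge\nabla_{\bar{\partial}_{l}}\alpha$, two simplifications drive everything: first, $i(\nabla^{1,0}f)$ kills every $d\bar{z}^{l}$, so as an antiderivation it passes through the factor $d\bar{z}^{l}\wedge$ with a clean sign; second, the mixed Christoffel symbols vanish at the point, giving $\nabla_{\bar{\partial}_{l}}(i(\partial_{j})\omega)=i(\partial_{j})\nabla_{\bar{\partial}_{l}}\omega$. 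After these, the two terms of the form $d\bar{z}^{l}\wedge i(W)\nabla_{\bar{\partial}_{l}}\omega$ cancel and only the derivative of the coefficient survives:
\begin{equation*}
\{\bar{\partial},i(\nabla^{1,0}f)\}\,\omega=\tfrac{1}{2}\,g^{j\bar{k}}f_{\bar{k}\bar{l}}\,d\bar{z}^{l}\wedge i(\partial_{j})\omega .
\end{equation*}
This vanishes for all $\omega$ exactly when $f_{\bar{k}\bar{l}}=0$ for all $k,l$, i.e.\ when $\nabla f$ is real holomorphic; the conjugate computation gives $\{\partial,i(\nabla^{0,1}f)\}\,\omega=\tfrac{1}{2}g^{i\bar{k}}f_{il}\,dz^{l}\wedge i(\partial_{\bar{k}})\omega$, which vanishes iff $f_{il}=0$, the same condition since $f$ is real.

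Assembling these facts yields both directions: if $\nabla f$ is real holomorphic, the off-diagonal parts vanish, $\mathcal{L}_{\nabla f}$ preserves bidegree, $\Delta_f$ preserves $A^{p,q}(M)$, and the displayed formula holds; conversely, if $\Delta_f$ preserves $A^{p,q}(M)$ then so does $\mathcal{L}_{\nabla f}=\Delta_f-\Delta$, which forces the $(+1,-1)$ and $(-1,+1)$ components to kill every form and hence $f_{\bar{k}\bar{l}}=0$. I expect the only real obstacle to be the bookkeeping inside the coordinate computation: keeping the antiderivation signs of the interior product straight and using the Kähler normal-coordinate vanishing to discard the $\nabla g$ and Christoffel contributions, so that precisely the Hessian term $f_{\bar{k}\bar{l}}$ is isolated.
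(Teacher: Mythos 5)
Your proposal is correct and takes essentially the same route as the paper: both expand $\mathcal{L}_{\nabla f}$ via Cartan's formula, split it by bidegree into a diagonal part identified with $\left(\Delta_f^{\partial}-\Delta^{\partial}\right)+\left(\Delta_f^{\bar{\partial}}-\Delta^{\bar{\partial}}\right)$ (then simplified via the K\"ahler identity $\Delta=\Delta^{\partial}=\Delta^{\bar{\partial}}$) plus the two off-diagonal anticommutators, and then compute in normal coordinates that $\left\{\bar{\partial},i\left(\nabla^{1,0}f\right)\right\}$ and $\left\{\partial,i\left(\nabla^{0,1}f\right)\right\}$ act as wedge-contraction against the Hessian components $f_{\bar{j}\bar{k}}$ and $f_{jk}$, whose vanishing is exactly real holomorphy of $\nabla f$. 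This matches the paper's decomposition $\mathcal{L}_{\nabla f}=\Delta_f^{\partial}+\Delta_f^{\bar{\partial}}-2\Delta+S$ with $S=S_1+S_2$ term for term.
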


\begin{proof}
Note that 
\begin{eqnarray*}
\mathcal{L}_{\nabla f} &=&\left( di\left( \nabla ^{1,0}f\right) +i\left(
\nabla ^{1,0}f\right) d\right) +\left( di\left( \nabla ^{0,1}f\right)
+i\left( \nabla ^{0,1}f\right) d\right) \\
&=&\left( \partial i\left( \nabla ^{1,0}f\right) +i\left( \nabla
^{1,0}f\right) \partial \right) +\left( \bar{\partial}i\left( \nabla
^{0,1}f\right) +i\left( \nabla ^{0,1}f\right) \bar{\partial}\right) \\
&&+\left( \bar{\partial}i\left( \nabla ^{1,0}f\right) +i\left( \nabla
^{1,0}f\right) \bar{\partial}\right) +\left( \partial i\left( \nabla
^{0,1}f\right) +i\left( \nabla ^{0,1}f\right) \partial \right) \\
&=&\left( \Delta _{f}^{\partial }-\Delta ^{\partial }\right) +\left( \Delta
_{f}^{\bar{\partial}}-\Delta ^{\bar{\partial}}\right) +S \\
&=&\Delta _{f}^{\partial }+\Delta _{f}^{\bar{\partial}}-2\Delta +S,
\end{eqnarray*}%
where 
\begin{eqnarray*}
S &=&\left( \bar{\partial}i\left( \nabla ^{1,0}f\right) +i\left( \nabla
^{1,0}f\right) \bar{\partial}\right) +\left( \partial i\left( \nabla
^{0,1}f\right) +i\left( \nabla ^{0,1}f\right) \partial \right) \\
&=&S_{1}+S_{2}.
\end{eqnarray*}

According to (\ref{f2}), we find that 
\begin{equation*}
\Delta _{f}=\Delta _{f}^{\partial }+\Delta _{f}^{\bar{\partial}}-\Delta +S.
\end{equation*}%
Hence, we can finish the proof by computing $S\left( \omega \right) $ for $%
\omega \in A^{p,q}\left( M\right) .$ We fist note that in local coordinates 
\begin{eqnarray*}
S_{1}\left( \omega \right) &=&\bar{\partial}i\left( \nabla ^{1,0}f\right)
\omega +i\left( \nabla ^{1,0}f\right) \bar{\partial}\omega \\
&=&\frac{1}{2}f_{\bar{k}\bar{j}}d\bar{z}^{j}\wedge i\left( \partial
_{k}\right) \omega +\frac{1}{2}f_{\bar{k}}\left( \bar{\partial}i\left(
\partial _{k}\right) +i\left( \partial _{k}\right) \bar{\partial}\right)
\omega \\
&=&\frac{1}{2}f_{\bar{k}\bar{j}}d\bar{z}^{j}\wedge i\left( \partial
_{k}\right) \omega ,
\end{eqnarray*}%
where we have used the fact that $\left( \bar{\partial}i\left( \partial
_{k}\right) +i\left( \partial _{k}\right) \bar{\partial}\right) \omega =0.$
We compute in a similar fashion and obtain 
\begin{equation*}
S_{2}\left( \omega \right) =f_{kj}dz^{j}\wedge i\left( \partial _{\bar{k}%
}\right) \omega .
\end{equation*}%
Hence, $S\left( \omega \right) =0$ if and only if $S_{1}\left( \omega
\right) =S_{2}\left( \omega \right) =0$ for all $\omega \in A^{p,q}\left(
M\right) $. This happens if and only if $f_{jk}=f_{\bar{j}\bar{k}}=0$, which
is the same as $\nabla f$ being real holomorphic.
\end{proof}

We now use this result to demonstrate Theorem \ref{funct}. In fact, we will
prove a stronger statement. Let us denote $B_{x_{0}}\left( R\right) $
the geodesic ball centered at point $x_{0}$ of radius $R>0$.

\begin{theorem}
\label{funct_proof}Let $\left( M,g\right) $ be complete K\"{a}hler manifold
and $f\in C^{\infty }\left( M\right) $ with $\nabla f$ real holomorphic. 
Suppose that $u$ is an $f$-harmonic function on $M$
and that there exists a constant $C>0$ so that 
\begin{equation*}
\int_{B_{x\,_{0}}\left( R\right) }\left\vert \nabla u\right\vert
^{2}e^{-f}\leq CR^{2},
\end{equation*}%
for all $R\geq R_{0}.$ Then $u$ is pluriharmonic. If, in addition, $f$ is
proper, then $u$ is constant on $M$.
\end{theorem}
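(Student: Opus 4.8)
The plan is to deduce pluriharmonicity by testing the decomposition of Proposition~\ref{pq_proof} against the real $1$-form $du$, and then to run a cutoff argument whose only input is the quadratic energy growth. Throughout, write $\langle\alpha,\beta\rangle_{f}:=\int_{M}\langle\alpha,\beta\rangle\,e^{-f}$ for the weighted $L^{2}$ pairing and $\|\cdot\|_{f}$ for its norm. First I observe that $du$ is $\Delta_{f}$-harmonic: since $u$ is a function, $d_{f}^{\ast}u=0$, so $\Delta_{f}u=d_{f}^{\ast}du$, and hence $\Delta_{f}(du)=dd_{f}^{\ast}du=d(\Delta_{f}u)=0$. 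Because $\nabla f$ is real holomorphic, Proposition~\ref{pq_proof} applies and gives the operator identity $\Delta_{f}=\Delta_{f}^{\partial}+\Delta_{f}^{\bar{\partial}}-\Delta$; combined with $\Delta_{f}(du)=0$ this yields $\Delta(du)=\Delta_{f}^{\partial}(du)+\Delta_{f}^{\bar{\partial}}(du)$. This is the single point at which real holomorphicity enters, which is exactly why no curvature condition is needed.

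Next I would pair this identity with $\phi^{2}\,du$ in the weighted inner product, where $\phi$ is a cutoff equal to $1$ on $B_{x_0}(R)$, supported in $B_{x_0}(2R)$, with $|\nabla\phi|\le C/R$. Using that $\partial_{f}^{\ast}$ and $\bar{\partial}_{f}^{\ast}$ are the weighted adjoints of $\partial$ and $\bar{\partial}$, and that $\partial\,du=\partial\bar{\partial}u$ while $\bar{\partial}\,du=-\partial\bar{\partial}u$, the right-hand side becomes, up to terms carrying a factor $\nabla\phi$,
\begin{equation*}
\langle\Delta_{f}^{\partial}du,\phi^{2}du\rangle_{f}+\langle\Delta_{f}^{\bar{\partial}}du,\phi^{2}du\rangle_{f}=\|\phi\,\partial_{f}^{\ast}du\|_{f}^{2}+\|\phi\,\bar{\partial}_{f}^{\ast}du\|_{f}^{2}+2\int_{M}\phi^{2}\,|\partial\bar{\partial}u|^{2}\,e^{-f},
\end{equation*}
where $|\partial\bar{\partial}u|^{2}=\sum_{i,j}|u_{i\bar{j}}|^{2}$ is precisely the complex Hessian whose vanishing is pluriharmonicity. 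For the left-hand side I would write $\Delta=dd^{\ast}+d^{\ast}d$; since $d\,du=0$ and, by the equation $\Delta_{f}u=0$, the function $d^{\ast}du$ equals $\langle\nabla f,\nabla u\rangle$ up to sign, a weighted integration by parts collapses the left-hand side to a single annular integral controlled by $\int_{B_{x_0}(2R)\setminus B_{x_0}(R)}|\langle\nabla f,\nabla u\rangle|\,|\nabla\phi|\,|\nabla u|\,e^{-f}$.

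The main obstacle is the analytic one: forcing all the annular error terms to vanish as $R\to\infty$ using only $\int_{B_{x_0}(R)}|\nabla u|^{2}e^{-f}\le CR^{2}$, and in particular without any bound on the size of $\nabla f$. What keeps the left-hand annular integral harmless is that $|\langle\nabla f,\nabla u\rangle|=|\Delta u|$ is bounded pointwise by a dimensional constant times $|\partial\bar{\partial}u|$ (by $f$-harmonicity); after Cauchy--Schwarz it is therefore dominated by $\varepsilon\int\phi^{2}|\partial\bar{\partial}u|^{2}e^{-f}+C_{\varepsilon}R^{-2}\int_{B_{x_0}(2R)}|\nabla u|^{2}e^{-f}$, whose first term is absorbed into the $2\int\phi^{2}|\partial\bar{\partial}u|^{2}e^{-f}$ above and whose second is $\le C_{\varepsilon}C$. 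The $\nabla\phi$ terms on the right are estimated the same way. A first pass thus yields the finiteness $\int_{M}|\partial\bar{\partial}u|^{2}e^{-f}<\infty$ together with $\int_{M}|\nabla du|^{2}e^{-f}<\infty$; knowing this, the tails $\int_{B_{x_0}(2R)\setminus B_{x_0}(R)}|\nabla du|^{2}e^{-f}\to0$, so a second pass drives every error term to zero and forces $\int_{M}|\partial\bar{\partial}u|^{2}e^{-f}=0$. Hence $u_{i\bar{j}}=0$ and $u$ is pluriharmonic.

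Finally, for the properness statement, pluriharmonicity gives $\Delta u=\sum_{i}u_{i\bar{i}}=0$, so $u$ is harmonic, and then $\Delta_{f}u=0$ yields the transport equation $\langle\nabla f,\nabla u\rangle=0$; that is, $u$ is constant along the gradient flow of $f$. When $f$ is proper its sublevel sets are compact and $f$ attains its minimum on a compact set $Z$, and the gradient flow of $-f$, having precompact trajectories, carries every point into $Z$ in the limit. Since $u$ is unchanged along the flow, $u$ takes on $M$ only values already attained on the compact set $Z$, so $u$ attains its supremum on $M$; the strong maximum principle for the harmonic function $u$ then forces $u$ to be constant.
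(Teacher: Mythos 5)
Your pluriharmonicity argument is correct and is in essence the paper's own: the paper invokes Proposition \ref{pq_proof} through its type-preservation statement, deducing $\Delta_f(\partial u)=0$ and running a two-pass cutoff (Caccioppoli) argument on $\theta=\partial u$, whereas you invoke the same proposition through the identity $\Delta_f=\Delta_f^{\partial}+\Delta_f^{\bar\partial}-\Delta$ and test against $\phi^2\,du$; the algebra is equivalent, and your absorption of the annular term via $|d^{\ast}du|=|\langle\nabla f,\nabla u\rangle|=|\Delta u|\le \sqrt{m}\,|\partial\bar\partial u|$ works. One slip: the first pass gives finiteness of $\int_M|\partial\bar\partial u|^2e^{-f}$, $\int_M|\partial_f^{\ast}du|^2e^{-f}$ and $\int_M|\bar\partial_f^{\ast}du|^2e^{-f}$, but not of the full Hessian $\int_M|\nabla du|^2e^{-f}$ that you claim (the $(2,0)$-part $u_{ij}$ is never controlled); fortunately your second pass only needs the three quantities actually controlled.

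The constancy step, however, contains a genuine gap. It is false that the flow of $-\nabla f$ ``carries every point into $Z$ in the limit'': trajectories of a negative gradient flow converge to the \emph{critical set} of $f$, not to its minimum set. Concretely, take $M=\mathbb{CP}^1\times\mathbb{C}$ with $f(p,w)=h(p)+|w|^2$, where $h$ is the height function (moment map) on $\mathbb{CP}^1$; then $\nabla f$ is real holomorphic and the sublevel sets of $f$ are compact, yet every point of $\{N\}\times\mathbb{C}$, with $N$ the maximum point of $h$, flows to the saddle $(N,0)\notin Z$. (The theorem's conclusion of course still holds there; it is your key claim that fails.) What the flow argument actually yields is only $u(M)\subseteq u(\mathrm{Crit}(f))$, and since $\mathrm{Crit}(f)$ need be neither equal to $Z$ nor compact, you cannot conclude that $u$ attains its supremum, so the appeal to the strong maximum principle is unjustified as written. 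The paper avoids dynamics entirely: for a regular value $t$, integration by parts over the compact sublevel set $\{f\le t\}$ gives
\begin{equation*}
\int_{\{f\le t\}}|\nabla u|^2e^{-f}
=-\int_{\{f\le t\}}u\,\Delta_fu\,e^{-f}
+\int_{\{f=t\}}u\,\frac{\langle\nabla u,\nabla f\rangle}{|\nabla f|}\,e^{-f}=0,
\end{equation*}
the first term vanishing since $\Delta_fu=0$ and the boundary term vanishing because pluriharmonicity gives $\Delta u=0$, hence $\langle\nabla u,\nabla f\rangle=0$ (your transport equation); thus $\nabla u\equiv 0$ on every sublevel set. Your approach could be repaired without this identity --- for instance, since boundary values of $u$ on $\{f=t\}$ are also attained at interior points (flow inward for a short time), $\max_{\{f\le t\}}u$ is attained in the interior and the strong maximum principle applies component by component of $\{f<t\}$ --- but the proof as submitted rests on a false statement about gradient flows.
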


\begin{proof}
For an $f$-harmonic function $u$, it can be checked that the $1-$form $\omega :=du$ is also 
$f$-harmonic, $\Delta _{f}\omega =0$.
 However, by splitting $\omega =\partial
u+\bar{\partial}u\,\ $into $\left( 1,0\right) $ and $\left( 0,1\right) $
components, we find that $\Delta _{f}\partial u=0$ as $\Delta _{f}$
preserves the $\left( 1,0\right) $ and $\left( 0,1\right) $ forms by
Proposition \ref{pq_proof}. 

So the $\left( 1,0\right) $ form $\theta :=\partial u$ verifies $\Delta
_{f}\theta =0$ and has growth rate%
\begin{equation*}
\int_{B_{x_{0}}\left( R\right) }\left\vert \theta \right\vert ^{2}e^{-f}\leq
CR^{2}.
\end{equation*}%
Let $\phi $ be the cut-off with support in $B_{x_{0}}\left( 2R\right) $
defined by 
\begin{equation*}
\phi \left( x\right) =\left\{ 
\begin{array}{c}
1 \\ 
\frac{1}{R}\left( 2R-d\left( x_{0},x\right) \right) 
\end{array}%
\right. 
\begin{array}{l}
\text{on }B_{x_{0}}\left( R\right)  \\ 
\text{on }B_{x_{0}}\left( 2R\right) \backslash B_{x_{0}}\left( R\right) 
\end{array}%
\end{equation*}%
By $\Delta _{f}\theta =0$ we see that 
\begin{eqnarray}
0 &=&\int_{M}\left\langle \left( dd_{f}^{\ast }+d_{f}^{\ast }d\right) \theta
,\phi ^{2}\theta \right\rangle e^{-f}  \label{f3} \\
&=&\int_{M}\left\langle d_{f}^{\ast }\theta ,d_{f}^{\ast }\left( \phi
^{2}\theta \right) \right\rangle e^{-f}+\int_{M}\left\langle d\theta
,d\left( \phi ^{2}\theta \right) \right\rangle e^{-f}  \notag \\
&\geq &\int_{M}\left\vert d\theta \right\vert ^{2}\phi
^{2}e^{-f}+\int_{M}\left\vert d_{f}^{\ast }\theta \right\vert ^{2}\phi
^{2}e^{-f}-2\int_{M}\left\vert d_{f}^{\ast }\theta \right\vert \left\vert
\theta \right\vert \phi \left\vert \nabla \phi \right\vert  e^{-f} \notag \\
&&-2\int_{M}\left\vert d\theta \right\vert \left\vert \theta \right\vert
\phi \left\vert \nabla \phi \right\vert e^{-f},  \notag
\end{eqnarray}%
where in the last line we have used the Cauchy-Schwarz inequality and that 
\begin{eqnarray*}
d\left( \phi ^{2}\theta \right)  &=&\phi ^{2}d\theta +d\phi ^{2}\wedge
\theta  \\
d_{f}^{\ast }\left( \phi ^{2}\theta \right)  &=&\phi ^{2}d_{f}^{\ast }\theta
-i\left( \nabla \phi ^{2}\right) \theta .
\end{eqnarray*}%
It follows from (\ref{f3}) that 
\begin{equation}
\int_{M}\left\vert d\theta \right\vert ^{2}\phi
^{2}e^{-f}+\int_{M}\left\vert d_{f}^{\ast }\theta \right\vert ^{2}\phi
^{2}e^{-f}\leq 8\int_{M}\left\vert \theta \right\vert ^{2}\left\vert \nabla
\phi \right\vert ^{2}e^{-f}.  \label{f4}
\end{equation}%
Since by the assumption $\int_{M}\left\vert \theta \right\vert ^{2}\left\vert
\nabla \phi \right\vert ^{2}e^{-f}\leq 4C,$ (\ref{f4}) implies that 
\begin{equation}
\int_{M}\left\vert d\theta \right\vert ^{2}e^{-f}+\int_{M}\left\vert
d_{f}^{\ast }\theta \right\vert ^{2}e^{-f}<\infty .  \label{f5}
\end{equation}

Using (\ref{f3}) again, we obtain 
\begin{eqnarray*}
&&\int_{M}\left\vert d\theta \right\vert ^{2}\phi
^{2}e^{-f}+\int_{M}\left\vert d_{f}^{\ast }\theta \right\vert ^{2}\phi
^{2}e^{-f} \\
&\leq &\frac{2}{R}\int_{B_{x_{0}}\left( 2R\right) \backslash B_{x_{0}}\left(
R\right) }\left\vert d_{f}^{\ast }\theta \right\vert \left\vert \theta
\right\vert  e^{-f}+\frac{2}{R}\int_{B_{x_{0}}\left( 2R\right) \backslash
B_{x_{0}}\left( R\right) }\left\vert d\theta \right\vert \left\vert \theta
\right\vert  e^{-f}\\
&\leq &\frac{2}{R}\left( \int_{B_{x_{0}}\left( 2R\right) \backslash
B_{x_{0}}\left( R\right) }\left\vert \theta \right\vert ^{2} e^{-f}\right) ^{\frac{1%
}{2}}\left( \int_{B_{x_{0}}\left( 2R\right) \backslash B_{x_{0}}\left(
R\right) }\left\vert d_{f}^{\ast }\theta \right\vert ^{2} e^{-f}\right) ^{\frac{1}{2%
}} \\
&&+\frac{2}{R}\left( \int_{B_{x_{0}}\left( 2R\right) \backslash
B_{x_{0}}\left( R\right) }\left\vert \theta \right\vert ^{2} e^{-f}\right) ^{\frac{1%
}{2}}\left( \int_{B_{x_{0}}\left( 2R\right) \backslash B_{x_{0}}\left(
R\right) }\left\vert d\theta \right\vert ^{2} e^{-f}\right) ^{\frac{1}{2}} \\
&\leq &4\sqrt{C}\left( \int_{B_{x_{0}}\left( 2R\right) \backslash
B_{x_{0}}\left( R\right) }\left\vert d_{f}^{\ast }\theta \right\vert
^{2} e^{-f}\right) ^{\frac{1}{2}}+4\sqrt{C}\left( \int_{B_{x_{0}}\left( 2R\right)
\backslash B_{x_{0}}\left( R\right) }\left\vert d\theta \right\vert
^{2} e^{-f}\right) ^{\frac{1}{2}}.
\end{eqnarray*}%
Together with (\ref{f5}), this implies that $d\theta =d_{f}^{\ast }\theta
=0. $ Now that $u$ is pluriharmonic follows immediately from 
\begin{equation*}
\overline{\partial }\partial u=d\partial u=d\theta =0.
\end{equation*}%
The second conclusion that $u$ is constant follows as in \cite{MW1}. Indeed,
integrating by parts, we have 
\begin{eqnarray*}
\int_{\left\{ f\leq t\right\} }\left\vert \nabla u\right\vert ^{2}e^{-f}
&=&-\int_{\left\{ f\leq t\right\} }u\Delta _{f}ue^{-f}+\int_{\left\{
f=t\right\} }u\frac{\left\langle \nabla u,\nabla f\right\rangle }{\left\vert
\nabla f\right\vert }e^{-f} \\
&=&0.
\end{eqnarray*}%
The first integral above is zero because $\Delta _{f}u=0,$ while the second
term is zero because $u$ being pluriharmonic implies in particular that $%
\Delta u=0,$ hence $\left\langle \nabla u,\nabla f\right\rangle =0$.
\end{proof}

In \cite{L}, a result similar to Theorem \ref{funct_proof} was obtained for a harmonic function
with its Dirichlet energy grows no faster than
$o\left( R_i^{2}\right) $ on a sequence of geodesic balls of radius $R_i.$
Obviously, our result generalizes and strenghthenes this statement. 
The improvement to $O\left( R^{2}\right) $ also enables us to conclude the following.

\begin{proposition}
Let $\left( M,g,f\right) $ be a complete K\"{a}hler shrinking Ricci soliton
of complex dimension $m=2$. Then any bounded harmonic function on $M$ must
be constant.
\end{proposition}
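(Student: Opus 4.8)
The plan is to reduce the statement to the pluriharmonicity conclusion of Theorem \ref{funct_proof}, combined with the positivity of the weighted Ricci tensor that is special to shrinkers. Throughout I normalize the soliton equation to $\mathrm{Ric}+\mathrm{Hess}(f)=\tfrac12 g$, so that $\nabla f$ is real holomorphic (as noted in the introduction), the weighted Ricci tensor satisfies $\mathrm{Ric}_f:=\mathrm{Ric}+\mathrm{Hess}(f)=\tfrac12 g>0$, and, by the well-known structure of shrinking solitons, $f$ is proper with $f(x)\sim\tfrac14 d(x_0,x)^2$ while the volume grows polynomially, $\mathrm{Vol}\,B_{x_0}(R)\le C R^{n}$ with $n=2m=4$.

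First I would control the unweighted Dirichlet energy of a bounded harmonic function $u$, say $|u|\le K$. Testing $\Delta u=0$ against $\phi^2 u$, with the standard cut-off $\phi$ supported in $B_{x_0}(2R)$ and equal to $1$ on $B_{x_0}(R)$, yields the Caccioppoli bound
\begin{equation*}
\int_{B_{x_0}(R)}|\nabla u|^2\le 4K^2\int_M|\nabla\phi|^2\le \frac{4K^2}{R^2}\,\mathrm{Vol}\,B_{x_0}(2R)\le C\,R^{\,n-2}.
\end{equation*}
It is exactly here that the hypothesis $m=2$ enters: for $n=4$ this reads $\int_{B_{x_0}(R)}|\nabla u|^2\le C R^{2}$, the borderline quadratic growth.

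Next I would upgrade $u$ to a pluriharmonic function. Since $du$ is a harmonic $1$-form and the unweighted Hodge Laplacian preserves the type of forms on the K\"ahler manifold $M$, the $(1,0)$-form $\theta:=\partial u$ satisfies $\Delta\theta=0$. Running the integral argument in the proof of Theorem \ref{funct_proof} in the unweighted case ($e^{-f}\equiv 1$, $d_f^{\ast}=d^{\ast}$), the quadratic bound above makes the right-hand side $8\int_M|\theta|^2|\nabla\phi|^2\le C$ uniformly in $R$; hence $\int_M(|d\theta|^2+|d^{\ast}\theta|^2)<\infty$, and the refined annular estimate forces $d\theta=d^{\ast}\theta=0$. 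Since $\partial\partial u=0$ always, $d\theta=\bar\partial\partial u=0$, so $u$ is pluriharmonic and $\theta=\partial u$ is a holomorphic $(1,0)$-form. Only the pluriharmonicity half of Theorem \ref{funct_proof} is used at this stage, and properness of $f$ plays no role here.

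Finally I would prove $\theta\equiv 0$. The Gaussian decay of $e^{-f}$ against the polynomial growth $\int_{B_{x_0}(R)}|\theta|^2\le CR^2$ gives $\int_M|\theta|^2e^{-f}<\infty$. For a holomorphic $(1,0)$-form the weighted Bochner formula has the shape $\tfrac12\Delta_f|\theta|^2=|\nabla\theta|^2+\mathrm{Ric}_f(\theta,\bar\theta)$, and the shrinker identity $\mathrm{Ric}_f=\tfrac12 g$ turns this into
\begin{equation*}
\tfrac12\Delta_f|\theta|^2=|\nabla\theta|^2+\tfrac12|\theta|^2\ge 0 .
\end{equation*}
Integrating $\phi^2\Delta_f|\theta|^2$ against $e^{-f}$, absorbing the cross term by Cauchy--Schwarz, and using that the weighted tail $\int_{B_{x_0}(2R)\setminus B_{x_0}(R)}|\theta|^2e^{-f}\to 0$, I obtain $\int_M(|\nabla\theta|^2+\tfrac12|\theta|^2)e^{-f}=0$, whence $\theta\equiv 0$ and $u$ is constant. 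I expect the genuine obstacle to be the borderline nature of the second step: the $O(R^2)$ bound is not a finite-energy bound, so one cannot simply integrate by parts, and it is precisely the forms argument of Theorem \ref{funct_proof}, with its improvement from $o(R^2)$ to $O(R^2)$, that extracts pluriharmonicity; the remaining soliton inputs (polynomial volume growth, properness of $f$, and the positivity $\mathrm{Ric}_f=\tfrac12 g$) then enter only in a routine way.
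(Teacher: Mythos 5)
Your first two steps coincide exactly with the paper's proof: the Caccioppoli inequality plus Cao--Zhou's at-most-Euclidean volume growth of shrinkers \cite{CZ} gives $\int_{B_{x_0}(R)}|\nabla u|^2\leq CR^2$ (this is precisely where $m=2$ enters), and Theorem \ref{funct_proof} applied with the trivial weight then yields that $u$ is pluriharmonic.

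Your final step, however, has a genuine gap. The identity
\begin{equation*}
\tfrac12\Delta_f|\theta|^2=|\nabla\theta|^2+\mathrm{Ric}_f(\theta,\bar\theta)
\end{equation*}
is false for a holomorphic $(1,0)$-form: the weighted Bochner formula carries the additional term $-\mathrm{Re}\langle\Delta_f\theta,\theta\rangle$, where $\Delta_f$ is the weighted Hodge Laplacian, and this term does not vanish here. Indeed, since $u$ is pluriharmonic, $\theta=\partial u$ is closed and $d^{\ast}\theta=0$, so $d_f^{\ast}\theta=i\left(\nabla^{1,0}f\right)\theta$, which is a holomorphic function but in general a nonconstant one, and hence $\Delta_f\theta=\partial\left(i\left(\nabla^{1,0}f\right)\theta\right)\neq 0$; the point is that the weight $f$ in this step is the soliton potential, whereas $u$ is harmonic for the \emph{trivial} weight, so the cancellation $d_f^{\ast}\theta=0$ that holds in Theorem \ref{funct_proof} is unavailable. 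Worse, the statement your step would establish --- that every holomorphic $(1,0)$-form with $\int_M|\theta|^2e^{-f}<\infty$ on a shrinking K\"ahler soliton vanishes --- is simply false: on the Gaussian shrinker $\left(\mathbb{C}^2,\,f=\tfrac14|z|^2\right)$ the form $\theta=dz^1$ has finite weighted $L^2$ norm but is nonzero (it is $\partial$ of the nonconstant pluriharmonic function $\mathrm{Re}\,z^1$). Since holomorphy of $\theta$ and its weighted $L^2$ bound are the \emph{only} inputs to your last step, that step cannot work: the boundedness of $u$ must be used a second time, and your argument never uses it after the Caccioppoli inequality. The paper closes the proof differently: it lifts the pluriharmonic function $u$ to the universal cover (again a shrinking K\"ahler--Ricci soliton), realizes it there through a bounded holomorphic function, and invokes the theorem of \cite{MW1} that the space of holomorphic functions of any fixed polynomial growth order is finite dimensional, which forces bounded holomorphic functions --- and hence $u$ --- to be constant.
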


\begin{proof}
Let $u$ be a bounded harmonic function. Then the $\left( 1,0\right) $ form $%
\theta =\partial u$ is harmonic. We claim that there exists $C>0$ so that
for all $R\geq R_{0},$ 
\begin{equation}
\int_{B_{x_{0}}\left( R\right) }\left\vert \theta \right\vert ^{2}\leq
CR^{2}.  \label{f7}
\end{equation}%
Indeed, this follows from a reverse Poincar\'{e} inequality and the fact
that $u$ is bounded. For a cut-off $\phi $ as in Theorem \ref{funct_proof}
we have 
\begin{eqnarray*}
\int_{M}\left\vert \nabla u\right\vert ^{2}\phi ^{2}
&=&-\int_{M}\left\langle \nabla u,\nabla \phi ^{2}\right\rangle u \\
&\leq &\frac{1}{2}\int_{M}\left\vert \nabla u\right\vert ^{2}\phi
^{2}+2\int_{M}u^2\left\vert \nabla \phi \right\vert ^{2} \\
&\leq &\frac{1}{2}\int_{M}\left\vert \nabla u\right\vert ^{2}\phi ^{2}+\frac{%
C}{R^{2}}\mathrm{Vol}\left( B_{x_{0}}\left( 2R\right) \right) \\
&\leq &\frac{1}{2}\int_{M}\left\vert \nabla u\right\vert ^{2}\phi
^{2}+CR^{2},
\end{eqnarray*}%
where in the last line we have used the fact that the volume growth of a shrinking Ricci
soliton is at most Euclidean by \cite{CZ}.

This proves (\ref{f7}). Now Theorem \ref{funct_proof}
implies that $u$ is pluriharmonic. The conclusion that $u$ is constant
follows from \cite{MW1}. Indeed, we may lift $u$ to a holomorphic function on the universal covering $\tilde{M}$ of $M$, which we continue to denote by $u$. So we have a bounded holomorphic function on a complete K\"{a}hler shrinking Ricci soliton of complex dimension $m=2$. According to \cite{MW1}, the space of holomorphic functions of any fixed polynomial growth order $d>0$ is finite dimensional. This implies that the space of bounded holomorphic functions is trivial. The proposition is proved.  
\end{proof}

\section{Harmonic maps}

In this section we prove Theorem \ref{maps}. We let $\left( M,g\right) $ be
a K\"{a}hler manifold of complex dimension $m$, admitting a function $f$ so
that $\nabla f$ is real holomorphic. Consider another K\"{a}hler manifold $%
\left( N,h\right) $ of complex dimension $n$. A map $u:M\rightarrow N$ is
called $f-$harmonic if $u$ is a critical point of the weighted energy 
\begin{equation*}
E_{f}\left( u\right) =\frac{1}{2}\int_{M}\left\vert du\right\vert ^{2}e^{-f}.
\end{equation*}%
with respect to any compactly supported variation of $u.$ We note that in
local coordinates, 
\begin{eqnarray*}
\left\vert du\right\vert ^{2} &=&2\left( \left\vert \partial u\right\vert
^{2}+\left\vert \bar{\partial}u\right\vert ^{2}\right) \\
&=&2\left( h_{a\bar{b}}g^{j\bar{k}}u_{j}^{a}\overline{u_{k}^{b}}+h_{a\bar{b}
}g^{j\bar{k}}u_{\bar{k}}^{a}\overline{u_{\bar{j}}^{b}}\right) .
\end{eqnarray*}%
The Euler-Lagrange equation implies 
\begin{equation*}
\tau _{f}\left( u\right) :=\tau \left( u\right) -i\left( \nabla f\right)
du=0,
\end{equation*}%
where $\tau \left( u\right) =\mathrm{div}\left( \nabla u\right) $ is the
usual tension field of $u$. In local coordinates, this means that 
\begin{equation*}
\Delta _{f}u^{a}+g^{j\overline{k}}\Gamma _{bc}^{a}u_{j}^{b}u_{\overline{k}
}^{c}=0,
\end{equation*}%
where 
\begin{equation*}
\Delta _{f}u^{a}=g^{j\bar{k}}\frac{\partial ^{2}u^{a}}{\partial
z^{j}\partial \bar{z}^{k}}-\frac{1}{2}g^{j\bar{k}}\left( u_{j}^{a}f_{\bar{k}
}+u_{\bar{k}}^{a}f_{j}\right) .
\end{equation*}%
Here the indices $a,b=1,2,..,n$ are used to indicate the local coordinates
on $N$ and $\Gamma _{bc}^{a}$ are the Christoffel symbols on $N$. We now
prove the following.

\begin{theorem}
\label{maps_proof}Let $\left( M,g\right) $ be a complete K\"{a}hler manifold
and suppose there exists $f\in C^{\infty }\left( M\right) $ so that $\nabla
f $ is real holomorphic. Assume in addition that $f$ achieves its minimum at
an isolated critical point $x_{0}\in M.$ Then any $f-$harmonic map $%
u:M\rightarrow N$ of finite total weighted energy into a K\"{a}hler manifold
with strongly seminegative curvature must be constant.
\end{theorem}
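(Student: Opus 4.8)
The plan is to follow the scheme of Theorem \ref{funct_proof}, replacing the scalar $(1,0)$-form $\partial u$ by its bundle-valued analogue and absorbing the geometry of the target into a weighted Bochner formula. Write $\theta:=\partial u$, now viewed as a $(1,0)$-form with values in the pullback bundle $u^{*}T^{1,0}N$, endowed with the induced Hermitian metric and the pullback connection, whose $(0,1)$-part I denote by $\nabla''$. Recall that $u$ is pluriharmonic exactly when $\nabla''\theta=0$, i.e. when the $(1,1)$-part of the second fundamental form $\nabla du$ vanishes. As in the function case, the first objective is to prove pluriharmonicity; the isolated minimum of $f$ enters only at the end.

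First I would derive a weighted Siu--Sampson Weitzenb\"ock identity for $\theta$. Pairing the weighted bundle Laplacian of $\theta$ against $\phi^{2}\theta$, with the same cut-off $\phi$ as in Theorem \ref{funct_proof}, and integrating by parts against $dv_{f}$, the equation $\tau_{f}(u)=0$ together with the K\"ahler condition on $M$ should yield an identity of the schematic shape
\begin{equation*}
\int_{M}\lvert \nabla''\theta\rvert^{2}\,\phi^{2}e^{-f}+\int_{M}Q\,\phi^{2}e^{-f}\le 8\int_{M}\lvert\theta\rvert^{2}\lvert\nabla\phi\rvert^{2}e^{-f},
\end{equation*}
where $Q$ is the contraction of the target curvature tensor $K_{a\bar{b}c\bar{d}}$ with $\partial u$ that appears in \cite{S}. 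The essential structural feature, precisely as in Sampson's argument, is that working with the $\bar{\partial}$-Laplacian on a K\"ahler domain causes the Ricci curvature of $M$ to drop out, so that no curvature hypothesis on $M$ is needed; and the strong seminegativity of the target, in the form stated above, is exactly the condition guaranteeing $Q\ge 0$. I expect this Weitzenb\"ock computation to be the main obstacle: one must check carefully that all domain-curvature terms cancel and that the surviving curvature contraction is the one governed by strong seminegativity.

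Granting the identity, finite total weighted energy closes the argument at once, and in fact more cleanly than in the $O(R^{2})$ setting of Theorem \ref{funct_proof}. Since $\lvert\theta\rvert^{2}\le\tfrac12\lvert du\rvert^{2}$ we have $\int_{M}\lvert\theta\rvert^{2}e^{-f}<\infty$, while $\lvert\nabla\phi\rvert\le 1/R$; hence the right-hand side above is at most $\tfrac{8}{R^{2}}\int_{M}\lvert\theta\rvert^{2}e^{-f}$, which tends to $0$ as $R\to\infty$. Letting $R\to\infty$ and using $\phi\uparrow 1$, both nonnegative integrals on the left must vanish, so $\nabla''\theta=0$; that is, $u$ is pluriharmonic.

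It remains to promote pluriharmonicity to constancy using the isolated minimum. Tracing $\nabla''\theta=0$ over the K\"ahler form of $M$ gives $\tau(u)=0$, so $u$ is in fact harmonic in the unweighted sense; comparing with $\tau_{f}(u)=\tau(u)-i(\nabla f)\,du=0$ then forces $i(\nabla f)\,du=du(\nabla f)=0$ everywhere. Thus $u$ is constant along the integral curves of $\nabla f$. Since $x_{0}$ is an isolated minimum, on a small neighborhood $f$ serves as a Lyapunov function for the negative gradient flow and every trajectory converges to $x_{0}$; constancy of $u$ along these trajectories gives $u\equiv u(x_{0})$ near $x_{0}$. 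Finally, a harmonic map that is constant on an open set is constant on the whole connected manifold by unique continuation, so $u$ is constant on $M$.
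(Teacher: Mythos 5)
Your overall architecture is the paper's: first show $u$ is pluriharmonic by a weighted Siu-type Bochner argument (Lemma \ref{pluriharmonic}), deduce $\tau(u)=0$ and hence $i\left(\nabla f\right)du=0$ from $\tau_{f}(u)=0$, then use the isolated minimum and unique continuation. The gap is in the first, decisive step. What you propose to verify there --- that the domain curvature drops out and that the surviving curvature contraction is the one controlled by strong seminegativity --- is exactly the part that is classical from Siu; it is the identity (\ref{*}) and the sign computation following it. The genuinely new difficulty in the weighted setting is the collection of first-order terms in $f$ produced by integrating by parts against $e^{-f}$, and your sketch never addresses them; indeed it never invokes the hypothesis that $\nabla f$ is real holomorphic, which is precisely where that hypothesis does its work. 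In the paper these terms do not cancel: after integration by parts using $f_{jk}=f_{\bar{j}\bar{k}}=0$ and the equation $\tau_{f}(u)=0$ (see (\ref{w3})--(\ref{w7})), they assemble into $-\frac{1}{4}\int_{M}\vert u_{j}^{a}f_{\bar{j}}-u_{\bar{k}}^{a}f_{k}\vert^{2}\phi^{2}e^{-f}$ plus cutoff terms, via the pointwise identity $\frac{1}{4}\vert u_{j}^{a}f_{\bar{j}}+u_{\bar{k}}^{a}f_{k}\vert^{2}-\mathrm{Re}\left(u_{j}^{a}\overline{u_{\bar{k}}^{a}}f_{\bar{j}}f_{\bar{k}}\right)=\frac{1}{4}\vert u_{j}^{a}f_{\bar{j}}-u_{\bar{k}}^{a}f_{k}\vert^{2}\geq 0$, and only then does one reach (\ref{w8}). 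Without this mechanism, or a substitute for it, your schematic inequality is unsupported; part one of your proposal is a plan, not a proof, and it defers exactly the novel content of the lemma.

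Your second half, by contrast, is correct and genuinely different from the paper's Lemma \ref{local}. The paper works on $D\left(\varepsilon\right)=\left\{f\leq\varepsilon\right\}\cap B_{x_{0}}\left(\delta\right)$ and integrates $\int_{D\left(\varepsilon\right)}g^{kj}h_{ab}u_{k}^{a}u_{j}^{b}$ by parts in normal coordinates on the target; the boundary term vanishes exactly because $i\left(\nabla f\right)du=0$, giving $\int_{D\left(\varepsilon\right)}\vert du\vert^{2}\leq C\eta\int_{D\left(\varepsilon\right)}\vert du\vert^{2}$ and hence $du=0$ near $x_{0}$. You instead observe that $u$ is constant along gradient trajectories and that these converge to $x_{0}$. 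That works, but the Lyapunov step needs the same care the paper takes in constructing $D\left(\varepsilon\right)$: a trajectory starting in a small metric ball need not stay in that ball, so you must take as your neighborhood the connected component of $\left\{f\leq\varepsilon\right\}$ through $x_{0}$, show that for small $\varepsilon$ it is compact and contained in $B_{x_{0}}\left(\delta\right)$ (this uses completeness of $M$ and that $f\left(x_{0}\right)$ is the global minimum), note that it is forward-invariant under the negative gradient flow and contains no critical point other than $x_{0}$, and then conclude that each $\omega$-limit set, being a nonempty connected set of critical points, equals $\left\{x_{0}\right\}$. With that supplied, your dynamical argument is a clean, somewhat more elementary substitute for the paper's integral estimate; both routes finish with unique continuation for harmonic maps.
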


We divide the proof of this theorem in two parts, each of independent
interest. In the first lemma, we follow the ideas of Siu \cite{S}, with the
necessary modifications in the weighted case inspired by our work in \cite%
{MW2}, to show that $u$ must be pluriharmonic. This, in particular, implies
that $i\left( \nabla f\right) du=0$.

\begin{lemma}
\label{pluriharmonic} Let $\left( M,g\right) $ be a complete K\"{a}hler
manifold and suppose there exists $f\in C^{\infty }\left( M\right) $ so that 
$\nabla f$ is real holomorphic. Then any $f-$harmonic map $u:M\rightarrow N$
of finite total weighted energy into a K\"{a}hler manifold $N$ of strongly
seminegative curvature must be pluriharmonic. In particular, it is harmonic
and $i\left( \nabla f\right) du=0.$
\end{lemma}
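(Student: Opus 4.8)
The plan is to adapt Siu's Bochner argument \cite{S} to the weighted setting, the only genuinely new ingredient being the systematic use of the hypothesis $f_{ij}=0$. I regard $\partial u$ as a $(1,0)$-form with values in the pullback bundle $u^{\ast }T^{1,0}N$, and write $u^{a}_{i\bar{j}}$ for the components of its $(0,1)$-covariant derivative $D''\partial u$, i.e. the $(1,1)$-part of the Hessian $\nabla du$. By torsion-freeness these are symmetric, $u^{a}_{i\bar{j}}=u^{a}_{\bar{j}i}$, and the map $u$ is pluriharmonic exactly when $u^{a}_{i\bar{j}}=0$ for all $i,j,a$. Written in covariant form, the $f$-harmonic map equation $g^{i\bar{j}}u^{a}_{i\bar{j}}=\tfrac{1}{2}g^{i\bar{j}}\left( u^{a}_{i}f_{\bar{j}}+u^{a}_{\bar{i}}f_{j}\right) $ says precisely that the \emph{weighted trace} of $u^{a}_{i\bar{j}}$ vanishes; what must be shown is that the full tensor, and not just its trace, is zero.

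First I would derive a weighted version of Siu's integral formula. Starting from the pointwise Bochner/Weitzenb\"{o}ck identity for $D''\partial u$, I multiply by a cutoff $\phi ^{2}$ and integrate against $e^{-f}\,dv$, integrating by parts with the weighted adjoints of \eqref{f1} in place of the ordinary ones. Two kinds of extra terms are produced by the weight. The first-order terms, carrying $f_{i}$ and $f_{\bar{j}}$, recombine into $i\left( \nabla f\right) du$ and are eliminated by the $f$-harmonic map equation, exactly as in \cite{MW2}. The only second-order contribution from the weight is the $(2,0)$-Hessian $f_{ij}$ together with its conjugate $f_{\bar{i}\bar{j}}$; this is the term that would break the bidegree bookkeeping, and it is here that I use $\nabla f$ real holomorphic, so that $f_{ij}=0$ (cf. Proposition \ref{pq_proof}). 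With these simplifications the computation yields an identity of the schematic form
\begin{equation*}
\int_{M}\left\vert D''\partial u\right\vert ^{2}\phi ^{2}e^{-f}+\int_{M}Q\left( du\right) \phi ^{2}e^{-f}=\int_{M}\left( \text{cross terms in }\nabla \phi \right) e^{-f},
\end{equation*}
where $Q(du)$ is built from the curvature tensor $K^{N}$ of the target contracted against the $u^{a}_{i}$, in the precise form appearing in the definition of strong seminegativity.

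Now I would run the cutoff argument as in Theorem \ref{funct_proof}. Taking $\phi $ supported in $B_{x_{0}}\left( 2R\right) $, equal to $1$ on $B_{x_{0}}\left( R\right) $ with $\left\vert \nabla \phi \right\vert \leq 1/R$, I bound the right-hand side by Cauchy--Schwarz and absorb the cross terms into $\int_{M}\left\vert D''\partial u\right\vert ^{2}\phi ^{2}e^{-f}$, leaving a remainder controlled by $C\int_{M}\left\vert \partial u\right\vert ^{2}\left\vert \nabla \phi \right\vert ^{2}e^{-f}\leq CR^{-2}E_{f}\left( u\right) $. Since $u$ has finite total weighted energy, this tends to $0$ as $R\rightarrow \infty $. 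By strong seminegativity $Q(du)\geq 0$, so both terms on the left are nonnegative; letting $R\rightarrow \infty $ forces each to vanish, giving $D''\partial u=0$, i.e. $u^{a}_{i\bar{j}}=0$, so $u$ is pluriharmonic. The ``in particular'' then follows at once: pluriharmonicity gives $\tau \left( u\right) =\mathrm{trace}\,\nabla du=0$, so $u$ is harmonic, and substituting back into $\tau _{f}\left( u\right) =\tau \left( u\right) -i\left( \nabla f\right) du=0$ yields $i\left( \nabla f\right) du=0$.

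The main obstacle is the weighted Bochner computation itself: verifying that, after integration by parts against $e^{-f}$, every extra term generated by the weight either cancels via the $f$-harmonic map equation or vanishes because $f_{ij}=0$, leaving Siu's curvature quantity $Q(du)$ intact with its favorable sign. Matching the contraction pattern and sign of $Q(du)$ to the stated strong seminegativity condition is the delicate point; by contrast the non-compactness is comparatively harmless, being absorbed by the finite-energy hypothesis just as in Theorem \ref{funct_proof}.
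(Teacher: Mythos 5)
Your outline follows the same skeleton as the paper's proof of this lemma (Siu's $\bar{\partial}$-Bochner formula for $D\bar{\partial}u$, integration by parts against $e^{-f}$, cutoff plus finite weighted energy), but there is a genuine gap at exactly the point you defer as ``the main obstacle.'' Your claim that the first-order weight terms ``recombine into $i\left( \nabla f\right) du$ and are eliminated by the $f$-harmonic map equation'' is not correct, and consequently your schematic identity omits the terms that make the weighted argument delicate. When one integrates $\int_{M}u_{j\bar{k}}^{a}\overline{u_{j\bar{k}}^{a}}\phi ^{2}e^{-f}$ by parts, the weight produces, besides the contribution $\int_{M}\tau ^{a}\left( u\right) \tau _{f}^{a}\left( u\right) \phi ^{2}e^{-f}$ (which does vanish by $f$-harmonicity), two kinds of survivors: (a) quadratic terms in $\nabla f\cdot du$, and (b) cross terms coupling $\nabla f\cdot du$ to $\nabla \phi \cdot du$, such as $\int_{M}\tau ^{a}\left( u\right) \overline{i\left( \nabla \phi ^{2}\right) du^{a}}\,e^{-f}$ (recall $\tau ^{a}\left( u\right) =i\left( \nabla f\right) du^{a}$ on an $f$-harmonic map) and $\int_{M}u_{j}^{a}\overline{u_{\bar{k}}^{a}}f_{\bar{j}}\left( \phi ^{2}\right) _{\bar{k}}e^{-f}$. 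The terms of type (b) can neither be absorbed into $\int_{M}\left\vert D\bar{\partial}u\right\vert ^{2}\phi ^{2}e^{-f}$ nor bounded by $C\int_{M}\left\vert du\right\vert ^{2}\left\vert \nabla \phi \right\vert ^{2}e^{-f}$: they carry a factor of $\nabla f$, and the lemma assumes nothing about the growth of $f$, so $\left\vert \nabla f\right\vert $ may be unbounded and these terms are not $O(R^{-2})E_{f}\left( u\right) $. As written, your estimate does not close.

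What rescues the argument in the paper is an exact pairing between (a) and (b), not a cancellation of each. After a second integration by parts (this is where $f_{jk}=f_{\bar{j}\bar{k}}=0$ enters), the type-(a) terms combine into
\begin{equation*}
-\frac{1}{4}\int_{M}\left\vert u_{j}^{a}f_{\bar{j}}+u_{\bar{k}}^{a}f_{k}\right\vert ^{2}\phi ^{2}e^{-f}+\int_{M}\mathrm{Re}\left( u_{j}^{a}\overline{u_{\bar{k}}^{a}}f_{\bar{j}}f_{\bar{k}}\right) \phi ^{2}e^{-f}=-\frac{1}{4}\int_{M}\left\vert u_{j}^{a}f_{\bar{j}}-u_{\bar{k}}^{a}f_{k}\right\vert ^{2}\phi ^{2}e^{-f},
\end{equation*}
a nonpositive quantity, while the type-(b) terms recombine exactly into
\begin{equation*}
\frac{1}{4}\int_{M}\left( u_{j}^{a}f_{\bar{j}}-u_{\bar{k}}^{a}f_{k}\right) \left( \overline{u_{h}^{a}\left( \phi ^{2}\right) _{\bar{h}}-u_{\bar{l}}^{a}\left( \phi ^{2}\right) _{l}}\right) e^{-f},
\end{equation*}
the same antisymmetric combination of $\nabla f\cdot du$ appearing again, now against the antisymmetric combination of $\nabla \phi \cdot du$. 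Cauchy--Schwarz then absorbs (b) into the nonpositive term from (a) --- with the borderline constant $\frac{1}{4}$, nothing to spare --- at the cost of $\int_{M}\left\vert du\right\vert ^{2}\left\vert \nabla \phi \right\vert ^{2}e^{-f}$. Only after this pairing does one reach the clean inequality $\frac{1}{2}\int_{M}\left\vert D\bar{\partial}u\right\vert ^{2}\phi ^{2}e^{-f}\leq 4\int_{M}\left\vert du\right\vert ^{2}\left\vert \nabla \phi \right\vert ^{2}e^{-f}$, to which the finite-energy hypothesis applies. So the missing idea in your proposal is precisely this structure: the weight terms do not disappear; they survive in a form where the bad cross terms and the good quadratic term are built from the same quantity $u_{j}^{a}f_{\bar{j}}-u_{\bar{k}}^{a}f_{k}$, and the proof works because of that coincidence, not because the weight terms cancel.
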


\begin{proof}
By the hypothesis, $u:M\rightarrow N$ satisfies 
\begin{gather*}
\ \tau _{f}\left( u\right) =0 \\
\int_{M}\left\vert du\right\vert ^{2}e^{-f}<\infty .
\end{gather*}%
Consider a cut-off function $\phi $ with support in $B_{x_{0}}\left(
2R\right),$ $\phi =1$ on $B_{x_{0}}\left( R\right) $ and $\left\vert \nabla
\phi \right\vert \leq \frac{1}{R}$ on $M.$ In the argument that follows, we
write $du=\partial u+\bar{\partial}u,$ where $\partial u$ and $\bar{\partial}%
u\,$\ are given by 
\begin{equation*}
\partial u=\frac{\partial u^{a}}{\partial z^{j}}dz^{j}\otimes \frac{\partial 
}{\partial w^{a}}\text{ and }\bar{\partial}u=\frac{\partial u^{a}}{\partial 
\bar{z}^{j}}d\bar{z}^{j}\otimes \frac{\partial }{\partial w^{a}}
\end{equation*}%
with $\left\{ w^{a}\right\} _{a=1,..,n}$ being the local complex coordinates
on $N.$ We further denote 
\begin{equation*}
D\bar{\partial}u=u_{j\bar{k}}^{a}dz^{j}\otimes d\bar{z}^{k}\otimes \frac{%
\partial }{\partial w^{a}}\text{,}
\end{equation*}%
where 
\begin{equation*}
u_{j\bar{k}}^{a}:=\frac{\partial u_{\bar{k}}^{a}}{\partial z^{j}}+\Gamma
_{bc}^{a}u_{j}^{b}u_{\overline{k}}^{c}.
\end{equation*}

Integration by parts implies 
\begin{gather}
\int_{M}\left\vert D\bar{\partial}u\right\vert ^{2}\phi
^{2}e^{-f}=\int_{M}\left\vert u_{j\bar{k}}^{a}\right\vert ^{2}\phi
^{2}e^{-f}=\int_{M}u_{j\bar{k}}^{a}\overline{u_{j\bar{k}}^{a}}\phi ^{2}e^{-f}
\label{w0} \\
=-\int_{M}u_{j\bar{k}\bar{j}}^{a}\overline{u_{\bar{k}}^{a}}\phi
^{2}e^{-f}+\int_{M}u_{j\bar{k}}^{a}\overline{u_{\bar{k}}^{a}}f_{\bar{j}}\phi
^{2}e^{-f}-\int_{M}u_{j\bar{k}}^{a}\overline{u_{\bar{k}}^{a}}\left( \phi
^{2}\right) _{\bar{j}}e^{-f}.  \notag
\end{gather}%
Let us note that 
\begin{equation*}
u_{j\bar{k}\bar{j}}^{a}=\frac{\partial u_{j\bar{k}}^{a}}{\partial \bar{z}^{j}%
}-\Gamma _{\bar{j}\bar{k}}^{\bar{h}}u_{j\bar{h}}^{a}+\Gamma _{bc}^{a}u_{j%
\bar{k}}^{b}u_{\bar{j}}^{c}.
\end{equation*}

We now investigate each term in (\ref{w0}). First, a well known computation
in \cite{S} yields 
\begin{equation}
u_{j\bar{k}\bar{j}}^{a}=u_{j\bar{j}\bar{k}}^{a}+K_{bc\bar{d}}^{a}u_{j}^{b}u_{%
\bar{k}}^{c}\overline{u_{j}^{d}}-K_{bc\bar{d}}^{a}u_{j}^{b}u_{\bar{j}}^{c}%
\overline{u_{k}^{d}},  \label{*}
\end{equation}%
where 
\begin{equation*}
K_{bc\bar{d}}^{a}=\frac{\partial \Gamma _{bc}^{a}}{\partial \bar{w}^{d}}
\end{equation*}
is the curvature tensor on $N$. The hypothesis that the curvature of $N$ is
strongly seminegative implies that 
\begin{eqnarray*}
&&K_{b\bar{a}c\bar{d}}\left( u_{j}^{b}u_{\bar{k}}^{c}\overline{u_{j}^{d}}%
\overline{u_{\bar{k}}^{a}}-u_{j}^{b}u_{\bar{j}}^{c}\overline{u_{k}^{d}}%
\overline{u_{\bar{k}}^{a}}\right) \\
&=&\frac{1}{2}K_{b\bar{a}c\bar{d}}\left( u_{j}^{b}\overline{u_{\bar{k}}^{a}}%
-u_{k}^{b}\overline{u_{\bar{j}}^{a}}\right) \left( \overline{u_{j}^{d}%
\overline{u_{\bar{k}}^{c}}-u_{k}^{d}\overline{u_{\bar{j}}^{c}}}\right) \\
&\geq &0.
\end{eqnarray*}%
Therefore, from this computation and (\ref{w0}) we conclude the following 
\begin{eqnarray}
\int_{M}\left\vert D\bar{\partial}u\right\vert ^{2}\phi ^{2}e^{-f} &\leq
&-\int_{M}\tau ^{a}\left( u\right) _{\bar{k}}\overline{u_{\bar{k}}^{a}}\phi
^{2}e^{-f}+\int_{M}u_{j\bar{k}}^{a}\overline{u_{\bar{k}}^{a}}f_{\bar{j}}\phi
^{2}e^{-f}  \label{w1} \\
&&-\int_{M}u_{j\bar{k}}^{a}\overline{u_{\bar{k}}^{a}}\left( \phi ^{2}\right)
_{\bar{j}}e^{-f}.  \notag
\end{eqnarray}%
In a similar fashion, we get 
\begin{eqnarray}
\int_{M}\left\vert D\bar{\partial}u\right\vert ^{2}\phi ^{2}e^{-f} &\leq
&-\int_{M}\tau ^{a}\left( u\right) _{k}\overline{u_{k}^{a}}\phi
^{2}e^{-f}+\int_{M}u_{j\bar{k}}^{a}\overline{u_{j}^{a}}f_{k}\phi ^{2}e^{-f}
\label{w2} \\
&&-\int_{M}u_{j\bar{k}}^{a}\overline{u_{j}^{a}}\left( \phi ^{2}\right)
_{k}e^{-f}.  \notag
\end{eqnarray}%
Adding (\ref{w1}) and (\ref{w2}) and integrating by parts, we obtain that%
\begin{gather}
\int_{M}\left\vert D\bar{\partial}u\right\vert ^{2}\phi ^{2}e^{-f}\leq
\int_{M}\tau ^{a}\left( u\right) \tau _{f}^{a}\left( u\right) \phi ^{2}e^{-f}
\label{w3} \\
+\frac{1}{2}\int_{M}\tau ^{a}\left( u\right) \left( \overline{u_{\bar{k}}^{a}%
}\left( \phi ^{2}\right) _{\bar{k}}+\overline{u_{k}^{a}}\left( \phi
^{2}\right) _{k}\right) e^{-f}+\frac{1}{2}\int_{M}u_{j\bar{k}}^{a}\left( 
\overline{u_{\bar{k}}^{a}}f_{\bar{j}}+\overline{u_{j}^{a}}f_{k}\right) \phi
^{2}e^{-f}  \notag \\
-\frac{1}{2}\int_{M}u_{j\bar{k}}^{a}\left( \overline{u_{\bar{k}}^{a}}\left(
\phi ^{2}\right) _{\bar{j}}+\overline{u_{j}^{a}}\left( \phi ^{2}\right)
_{k}\right) e^{-f}.  \notag
\end{gather}%
Note the first term in the right side of (\ref{w3}) is zero as $u$ is $f-$%
harmonic. Furthermore, integration by parts implies 
\begin{gather*}
\int_{M}u_{j\bar{k}}^{a}\overline{u_{\bar{k}}^{a}}f_{\bar{j}}\phi
^{2}e^{-f}=-\int_{M}u_{j}^{a}\overline{u_{k\bar{k}}^{a}}f_{\bar{j}}\phi
^{2}e^{-f}+\int_{M}u_{j}^{a}\overline{u_{\bar{k}}^{a}}f_{\bar{j}}f_{\bar{k}%
}\phi ^{2}e^{-f} \\
-\int_{M}u_{j}^{a}\overline{u_{\bar{k}}^{a}}f_{\bar{j}}\left( \phi
^{2}\right) _{\bar{k}}e^{-f},
\end{gather*}%
where we have used the fact that $f_{jk}=f_{\bar{j}\bar{k}}=0.$

One obtains a similar formula for $\int_{M}u_{j\bar{k}}^{a}\overline{%
u_{j}^{a}} f_{k}\phi ^{2}e^{-f}.$ Putting together, we see that the third
term in the right side of (\ref{w3}) becomes 
\begin{gather}
\frac{1}{2}\int_{M}u_{j\bar{k}}^{a}\left( \overline{u_{\bar{k}}^{a}}f_{\bar{j%
}}+\overline{u_{j}^{a}}f_{k}\right) \phi ^{2}e^{-f}=-\frac{1}{2}%
\int_{M}\left( u_{j}^{a}f_{\bar{j}}+u_{\bar{k}}^{a}f_{k}\right) \overline{%
\tau ^{a}\left( u\right) }\phi ^{2}e^{-f}  \label{w4} \\
+\int_{M}\mathrm{Re}\left( u_{j}^{a}\overline{u_{\bar{k}}^{a}}f_{\bar{j}}f_{%
\bar{k}}\right) \phi ^{2}e^{-f}-\frac{1}{2}\int_{M}u_{j}^{a}\overline{u_{%
\bar{k}}^{a}}f_{\bar{j}}\left( \phi ^{2}\right) _{\bar{k}}e^{-f}-\frac{1}{2}%
\int_{M}u_{\bar{k}}^{a}\overline{u_{j}^{a}}f_{k}\left( \phi ^{2}\right) _{j} e^{-f}.
\notag
\end{gather}
Plugging into (\ref{w3}), we conclude 
\begin{gather}
\int_{M}\left\vert D\bar{\partial}u\right\vert ^{2}\phi ^{2}e^{-f}\leq -%
\frac{1}{4}\int_{M}\left\vert u_{j}^{a}f_{\bar{j}}+u_{\bar{k}%
}^{a}f_{k}\right\vert ^{2}\phi ^{2}e^{-f}+\int_{M}\mathrm{Re}\left( u_{j}^{a}%
\overline{u_{\bar{k}}^{a}}f_{\bar{j}}f_{\bar{k}}\right) \phi^2\,e^{-f} \label{w5} \\
+\int_{M}\tau ^{a}\left( u\right) \overline{i\left( \nabla \phi ^{2}\right)
du^{a}}e^{-f}-\frac{1}{2}\int_{M}u_{j}^{a}\overline{u_{\bar{k}}^{a}}f_{\bar{j%
}}\left( \phi ^{2}\right) _{\bar{k}}e^{-f}-\frac{1}{2}\int_{M}u_{\bar{k}}^{a}%
\overline{u_{j}^{a}}f_{k}\left( \phi ^{2}\right) _{j} e^{-f} \notag \\
-\frac{1}{2}\int_{M}u_{j\bar{k}}^{a}\left( \overline{u_{\bar{k}}^{a}}\left(
\phi ^{2}\right) _{\bar{j}}+\overline{u_{j}^{a}}\left( \phi ^{2}\right)
_{k}\right) e^{-f}.  \notag
\end{gather}

Note that 
\begin{equation*}
\frac{1}{4}\left\vert u_{j}^{a}f_{\bar{j}}+u_{\bar{k}}^{a}f_{k}\right\vert
^{2}-\mathrm{Re}\left( u_{j}^{a}\overline{u_{\bar{k}}^{a}}f_{\bar{j}}f_{\bar{%
k}}\right) =\frac{1}{4}\left\vert u_{j}^{a}f_{\bar{j}}-u_{\bar{k}%
}^{a}f_{k}\right\vert ^{2}\geq 0.
\end{equation*}%
Also, the last term in (\ref{w5}) can be estimated as 
\begin{equation*}
\int_{M}\left\vert u_{j\bar{k}}^{a}\overline{u_{\bar{k}}^{a}}\left( \phi
^{2}\right) _{\bar{j}}\right\vert e^{-f}\leq 2\int_{M}\left\vert D\bar{%
\partial}u\right\vert \left\vert du\right\vert \phi \left\vert \nabla \phi
\right\vert  e^{-f}.
\end{equation*}

Hence, by the Cauchy-Schwarz inequality, (\ref{w5}) becomes 
\begin{gather}
\frac{1}{2}\int_{M}\left\vert D\bar{\partial}u\right\vert ^{2}\phi
^{2}e^{-f}\leq -\frac{1}{4}\int_{M}\left\vert u_{j}^{a}f_{\bar{j}}-u_{\bar{k}%
}^{a}f_{k}\right\vert ^{2}\phi ^{2}e^{-f}+2\int_{M}\left\vert du\right\vert
^{2}\left\vert \nabla \phi \right\vert ^{2}e^{-f}  \label{w6} \\
+\int_{M}\tau ^{a}\left( u\right) \overline{i\left( \nabla \phi ^{2}\right)
du^{a}}e^{-f}-\frac{1}{2}\int_{M}u_{j}^{a}\overline{u_{\bar{k}}^{a}}f_{\bar{j%
}}\left( \phi ^{2}\right) _{\bar{k}}e^{-f}-\frac{1}{2}\int_{M}u_{\bar{k}}^{a}%
\overline{u_{j}^{a}}f_{k}\left( \phi ^{2}\right) _{j} e^{-f}.  \notag
\end{gather}%
We now deal with the other terms as follows. We have that 
\begin{gather}
\int_{M}\tau ^{a}\left( u\right) \overline{i\left( \nabla \phi ^{2}\right)
du^{a}}e^{-f}-\frac{1}{2}\int_{M}u_{j}^{a}\overline{u_{\bar{k}}^{a}}f_{\bar{j%
}}\left( \phi ^{2}\right) _{\bar{k}}e^{-f}  \label{w7} \\
-\frac{1}{2}\int_{M}u_{\bar{k}}^{a}\overline{u_{j}^{a}}f_{k}\left( \phi
^{2}\right) _{j}e^{-f}=\frac{1}{4}\int_{M}\left( u_{j}^{a}f_{\bar{j}}-u_{%
\bar{k}}^{a}f_{k}\right) \left( \overline{u_{h}^{a}\phi _{\bar{h}}^{2}-u_{%
\bar{l}}^{a}\phi _{l}^{2}}\right) e^{-f}  \notag \\
\leq \frac{1}{4}\int_{M}\left\vert u_{j}^{a}f_{\bar{j}}-u_{\bar{k}%
}^{a}f_{k}\right\vert ^{2}\phi ^{2}e^{-f}+\int_{M}\left\vert du\right\vert
^{2}\left\vert \nabla \phi \right\vert ^{2}e^{-f}.  \notag
\end{gather}%
Putting (\ref{w7}) into (\ref{w6}), we get 
\begin{equation}
\frac{1}{2}\int_{M}\left\vert D\bar{\partial}u\right\vert ^{2}\phi
^{2}e^{-f}\leq 4\int_{M}\left\vert du\right\vert ^{2}\left\vert \nabla \phi
\right\vert ^{2}e^{-f}.  \label{w8}
\end{equation}

Since $\int_{M}\left\vert du\right\vert ^{2}e^{-f}<\infty ,$ it is easy to
see that as $R\rightarrow \infty ,$%
\begin{equation*}
\int_{M}\left\vert du\right\vert ^{2}\left\vert \nabla \phi \right\vert
^{2}e^{-f}\rightarrow 0.
\end{equation*}%
Therefore, by letting $R\rightarrow \infty $ in (\ref{w8}), we conclude that 
$D\bar{\partial}u=0$ or $u$ is pluriharmonic. In particular, this implies
that $u$ is a harmonic map. Hence, $\tau (u)=0$ and $i\left( \nabla f\right)
du=0.$ This proves the lemma.
\end{proof}

We point out that Lemma \ref{pluriharmonic} also holds under energy
growth assumption on $u$ that 
\begin{equation*}
\int_{B_{x_{0}}\left( R\right) }\left\vert du\right\vert ^{2}e^{-f}\leq
CR^{2},\text{ \ for all }R\geq R_{0}\text{. }
\end{equation*}%
The argument for this improvement is similar to that of Theorem \ref%
{funct_proof}.

We next present a local result that holds for harmonic maps between any two
Riemannian manifolds. We show that $u$ must be constant if $u$ is harmonic
and $i\left( \nabla f\right) du=0.$

\begin{lemma}
\label{local}Let $\left( M,g\right) $ be a complete Riemannian manifold and $%
u:\Omega \rightarrow N$ a harmonic map from a domain $\Omega\subset M$ into
a Riemannian manifold $N.$ If $i\left( \nabla f\right) du=0$ on $\Omega$ for
a smooth function $f$ and f has unique minimum point $x_{0}\in \Omega,$ then 
$u $ must be constant.
\end{lemma}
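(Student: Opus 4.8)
The plan is to exploit the geometric content of the hypothesis $i(\nabla f)\,du=0$. Along the negative gradient flow $\gamma$ of $f$, i.e. the solution of $\dot\gamma=-\nabla f(\gamma)$, one has $\frac{d}{dt}\,u(\gamma(t))=du(\dot\gamma)=-\,i(\nabla f)\,du=0$, so $u$ is constant along every trajectory of $-\nabla f$. The strategy is then in two steps: first show that $u$ equals the constant value $u(x_0)$ on an entire neighborhood of $x_0$, and then propagate this constancy to all of $\Omega$ by unique continuation.

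For the local step, I would use that $x_0$ is a strict minimum of $f$ (being the unique minimum point) and, as in Theorem \ref{maps_proof}, an isolated critical point. Pick a geodesic ball $\bar B(x_0,r)\subset\Omega$ so small that $x_0$ is the only critical point of $f$ in it. Since $f>f(x_0)$ on $\bar B(x_0,r)\setminus\{x_0\}$, the restriction of $f$ to the sphere $\partial B(x_0,r)$ attains a minimum $m>f(x_0)$. Fixing $c\in(f(x_0),m)$ and letting $U$ be the connected component of $\{f<c\}$ containing $x_0$, one checks that $\bar U\subset B(x_0,r)$ is compact (points of $\bar U$ cannot lie on $\partial B(x_0,r)$, where $f\geq m>c$) and that $x_0$ is its only critical point. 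Because $\frac{d}{dt}f(\gamma(t))=-|\nabla f|^2\leq 0$, every trajectory starting in $U$ remains in $U$ for all forward time, and its $\omega$-limit set is a nonempty, connected, invariant subset of the critical set $\{x_0\}$, hence equals $\{x_0\}$. Thus $\gamma_p(t)\to x_0$ for each $p\in U$, and by continuity $u(p)=\lim_{t\to\infty}u(\gamma_p(t))=u(x_0)$, so $u\equiv u(x_0)$ on the open neighborhood $U$ of $x_0$.

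To conclude, I would invoke the strong unique continuation property for harmonic maps: a harmonic map from a connected domain that coincides with a constant map on a nonempty open set must be constant throughout. Since $u$ is harmonic and $u\equiv u(x_0)$ on $U$, it follows that $u\equiv u(x_0)$ on all of $\Omega$, which is the assertion.

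I expect the convergence of the gradient trajectories to $x_0$ to be the main technical point; it is precisely here that both the minimality and the isolatedness of the critical point are used, ruling out the flow being captured at some other critical level (which could otherwise happen if critical points accumulated at $x_0$). The subsequent appeal to unique continuation is the essential input that upgrades local constancy to global constancy, since the gradient-flow argument by itself only controls $u$ on the basin of attraction of $x_0$.
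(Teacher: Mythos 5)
Your proof is correct, and its key local step takes a genuinely different route from the paper's. The paper also reduces the lemma to showing $u$ is constant in a neighborhood of $x_0$ and then finishes with unique continuation, but its local argument is an integral estimate rather than a dynamical one: working in normal coordinates on $N$ centered at $u(x_0)$, it integrates $g^{kj}h_{ab}u_k^a u_j^b$ by parts over the small sublevel region $D(\varepsilon)=\{f\le \varepsilon\}\cap B_{x_0}(\delta)$, observes that the boundary term vanishes precisely because $g^{kj}u_k^a f_j=0$, and bounds the interior terms (via the harmonic map equation, $|\Delta u^a|\le C\eta |du|^2$, and the normal-coordinate bound $|\partial h_{ab}/\partial y^c|\le C\eta$) to obtain $\int_{D(\varepsilon)}|du|^2\le C\eta \int_{D(\varepsilon)}|du|^2$, which forces $du\equiv 0$ on $D(\varepsilon)$ once $\eta$ is small. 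You instead exploit the flow of $-\nabla f$: constancy of $u$ along trajectories plus the LaSalle/$\omega$-limit argument (every precompact orbit of a gradient flow limits onto critical points, and $x_0$ is the only one in $\bar U$) gives $u\equiv u(x_0)$ on $U$. Your route is more elementary and more general in one respect: the local step uses no harmonicity at all---it applies to any $C^1$ map with $i(\nabla f)\,du=0$, harmonicity entering only through unique continuation---and it makes transparent why isolatedness of the critical point is needed. The paper's route stays entirely within PDE-style energy estimates, needs no ODE/dynamics input, and produces $du=0$ locally without analyzing convergence of flow lines. One caveat applies equally to both arguments: the lemma as stated assumes only a unique minimum point, while both you and the paper actually use that $x_0$ is an isolated critical point (as is supplied in the application, Theorem \ref{maps_proof}).
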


\begin{proof}
Since this lemma is stated in a Riemannian setting, we will denote here 
\begin{equation*}
du=u_{k}^{a}dx^{k}\otimes \frac{\partial }{\partial y^{a}},
\end{equation*}%
where $\left\{ x^{k}\right\} _{k=1,..,2m}$ and $\left\{ y^{a}\right\}
_{a=1,..,2n}$ are real coordinates on $M$ and $N$. The fact that $u$ is
harmonic and $i\left( \nabla f\right) du=0$ means that%
\begin{eqnarray}
\Delta u^{a}+g^{kj}\Gamma _{bc}^{a}u_{k}^{b}u_{j}^{c} &=&0  \label{l1} \\
g^{kj}u_{k}^{a}f_{j} &=&0.  \notag
\end{eqnarray}

Let $\delta >0$ be sufficiently small so that $u\left( B_{x_{0}}\left(
\delta \right) \right) \subset B_{y_{0}}\left( \rho \right) ,$ where $%
y_{0}=u\left( x_{0}\right) ,$ and the exponential map $\exp
_{y_{0}}:B_0(\rho)\subset T_{y_{0}}N\rightarrow B_{y_{0}}\left( \rho \right) 
$ is a diffeomorphism. Under the induced normal coordinates, we have that
for $y\in B_{y_0}(\eta),$ 
\begin{equation}
|h_{ab}(y)-\delta _{ab}|\le C\,\eta \text{ and }\left\vert\frac{\partial
h_{ab}}{\partial y^{c}}\right\vert(y)\leq C\,\eta  \label{l2}
\end{equation}%
for all $\eta\leq \rho,$ where $C$ is a constant independent of $\eta.$

We normalize $f$ so that $f\left( x_{0}\right) =0.$ Since $x_{0}$ is an
isolated critical point, there exists $\varepsilon >0$ small enough so that
the level set $\left\{ f=\varepsilon \right\} $ has a connected component
completely contained in $B_{x_{0}}\left( \delta \right) .$ Denote by 
\begin{equation*}
D\left( \varepsilon \right) :=\left\{ f\leq \varepsilon \right\} \cap
B_{x_{0}}\left( \delta \right)
\end{equation*}%
and note that $\partial D\left( \varepsilon \right) $ has unit normal vector 
$\nu :=\frac{\nabla f}{\left\vert \nabla f\right\vert }$. Clearly, $%
u\left(D(\varepsilon) \right)\subset B_{y_0}(\eta)$ with $\eta\to 0$ as $%
\varepsilon\to 0.$

Integrating by parts, 
\begin{eqnarray}
\int_{D\left( \varepsilon \right) }g^{kj}\,h_{ab}\,u_{k}^{a}\,u_{j}^{b}
&=&-\int_{D\left( \varepsilon \right) }h_{ab}\,\left( \Delta u^{a}\right)
u^{b}-\int_{D\left( \varepsilon \right) }\,\left\langle
dh_{ab},du^{a}\right\rangle u^{b}  \label{l3} \\
&&+\int_{\partial D\left( \varepsilon \right) }\frac{1}{\left\vert \nabla
f\right\vert }g^{kj}\,h_{ab}\,u_{k}^{a}u^{b}f_{j}  \notag \\
&\leq &C\,\eta \int_{D\left( \varepsilon \right) }\left\vert du\right\vert
^{2},  \notag
\end{eqnarray}%
where we have used (\ref{l1}) so that the boundary term is zero, and that 
\begin{equation*}
|\Delta u^{a}|=|g^{kj}\,\Gamma _{bc}^{a}u_{k}^{b}u_{j}^{c}|\leq C\,\eta
\,|du|^{2},
\end{equation*}%
as well as 
\begin{eqnarray*}
\left\vert \left\langle dh_{ab},du^{a}\right\rangle u^{b}\right\vert
&=&\left\vert g^{kj}\frac{\partial h_{ab}}{\partial x^{k}}%
u_{j}^{a}u^{b}\right\vert =\left\vert g^{kj}\frac{\partial h_{ab}}{\partial
y^{c}}u_{j}^{a}u_{k}^{c}u^{b}\right\vert \\
&\leq &C\eta \left\vert du\right\vert ^{2}.
\end{eqnarray*}

By choosing $\varepsilon >0$ to be sufficiently small, this implies that $%
\left\vert du\right\vert =0$ on $D\left( \varepsilon \right) $. Therefore, $%
u $ is constant on $D\left( \varepsilon \right).$ By the unique continuation
property, $u$ must be constant on $\Omega.$
\end{proof}

We can now prove Theorem \ref{maps_proof}. Using Lemma \ref{pluriharmonic},
we see that $u:M\rightarrow N$ must be harmonic and $i\left( \nabla f\right)
du=0$ on $M$. Now Lemma \ref{local} says that $u$ must be constant on $M.$
This proves Theorem \ref{maps_proof}.

It turns out under the hypothesis in Theorem \ref{maps_proof}, we also have
Liouville property for harmonic maps, not just for weighted ones. The idea
is to show that $i\left(\nabla f\right) du=0$ again and then appeal to Lemma %
\ref{local}.

\begin{theorem}
Let $\left( M,g\right) $ be a compact K\"{a}hler manifold and suppose there
exists $f\in C^{\infty }\left( M\right) $ so that $\nabla f$ is real
holomorphic. Assume in addition that $f$ achieves its minimum at an isolated
critical point. Then any harmonic map $u:M\rightarrow N$ into a K\"{a}hler
manifold of strongly seminegative curvature must be constant.
\end{theorem}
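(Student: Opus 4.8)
The plan is to reduce to Lemma~\ref{local}, exactly as the preceding remark suggests, by proving that a genuine harmonic map on the compact $M$ still satisfies $i(\nabla f)du=0$. Since $M$ is compact, no cut-off is needed, so I would first rerun the Bochner computation of Lemma~\ref{pluriharmonic} with $\phi\equiv1$, now using the (unweighted) harmonicity $\tau(u)=0$ in place of the weighted equation to kill the first term of (\ref{w3}). Every term carrying a derivative of $\phi$ in (\ref{w5}) then vanishes, and after applying the pointwise identity $\tfrac14|u^a_jf_{\bar j}+u^a_{\bar k}f_k|^2-\mathrm{Re}(u^a_j\overline{u^a_{\bar k}}f_{\bar j}f_{\bar k})=\tfrac14|u^a_jf_{\bar j}-u^a_{\bar k}f_k|^2$ the right-hand side collapses to
\[
\int_M|D\bar\partial u|^2e^{-f}\le-\tfrac14\int_M|u^a_jf_{\bar j}-u^a_{\bar k}f_k|^2e^{-f}\le0.
\]
Both sides must therefore vanish, so $u$ is pluriharmonic and, pointwise, $u^a_jf_{\bar j}=u^a_{\bar k}f_k$ for every $a$.

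The crux is to promote this to $i(\nabla f)du=0$. Let $X:=g^{i\bar k}f_{\bar k}\,\partial_{z^i}$ be the holomorphic vector field attached to $f$, and consider the section $P:=i(X)\partial u$ of the pulled-back bundle $u^*T^{1,0}N$, with components $P^a=f_{\bar j}u^a_j$ in a unitary frame. I claim $P$ is holomorphic. Indeed, differentiating $P^a$ in a $\bar z^l$ direction and using both that $X$ is holomorphic and pluriharmonicity in the form $\partial_{\bar l}u^a_j=-\Gamma^a_{bc}u^b_ju^c_{\bar l}$ gives $\partial_{\bar l}P^a=-\Gamma^a_{bc}u^c_{\bar l}P^b$; this is cancelled exactly by the Christoffel term of the pullback $\bar\partial$-connection, by symmetry of $\Gamma^a_{bc}$ in its lower indices, so $\nabla^{0,1}P=0$.

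For a holomorphic section of a Hermitian holomorphic bundle one has the Bochner identity $\Delta|P|^2=|\nabla^{1,0}P|^2-\langle\mathcal R\,P,P\rangle$, where $\mathcal R$ is the trace over $M$ of the curvature of $u^*T^{1,0}N$. The strong seminegativity of $N$ is precisely the sign condition---the same one underlying (\ref{*})---that forces $\langle\mathcal R\,P,P\rangle\le0$, so $|P|^2$ is subharmonic and hence constant on the compact $M$. Now the isolated minimum enters decisively: at $x_0$ we have $\nabla f(x_0)=0$, hence $X(x_0)=0$ and $P(x_0)=0$, so the constant is zero and $P\equiv0$, i.e.\ $f_{\bar j}u^a_j\equiv0$. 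Together with the pointwise equality $u^a_jf_{\bar j}=u^a_{\bar k}f_k$ from the first step this yields $f_ku^a_{\bar k}\equiv0$ as well, and therefore $i(\nabla f)du=0$ on all of $M$.

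Finally, with $u$ harmonic and $i(\nabla f)du=0$, I would invoke Lemma~\ref{local}: because $x_0$ is an isolated critical point at which the minimum is attained, it is the unique minimum of $f$ on some ball $\Omega=B_{x_0}(\delta)$, so the lemma gives $u$ constant on $\Omega$, and unique continuation for harmonic maps propagates this to the connected $M$. The step I expect to be the main obstacle is the two-fold bookkeeping around $P$: verifying that $P$ is genuinely holomorphic (the Christoffel cancellation above) and, more delicately, checking that strong seminegativity delivers the correct sign $\langle\mathcal R\,P,P\rangle\le0$ in the Bochner formula. Everything else is either the pluriharmonicity machinery already in place or the soft argument that compactness plus a single vanishing point forces $P\equiv0$---which is exactly why the isolated-minimum hypothesis cannot be dropped, consistent with the product example $N\times\mathbb C$.
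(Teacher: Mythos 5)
Your overall strategy---reduce to Lemma \ref{local} by establishing $i(\nabla f)du=0$---is the same as the paper's, and your final step (isolated minimum plus unique continuation) is fine. But both of your key intermediate steps have genuine gaps. First, the inequality you claim from rerunning the weighted computation with $\phi \equiv 1$ is not what that computation yields. The term $-\frac{1}{4}\int_{M}\left\vert u_{j}^{a}f_{\bar{j}}+u_{\bar{k}}^{a}f_{k}\right\vert ^{2}e^{-f}$ in (\ref{w5}) is not a structural feature of the identity: it arises in (\ref{w4}) from the factor $\overline{\tau ^{a}\left( u\right) }$ only after substituting the \emph{weighted} equation $\tau ^{a}\left( u\right) =i\left( \nabla f\right) du^{a}=\frac{1}{2}\left( u_{j}^{a}f_{\bar{j}}+u_{\bar{k}}^{a}f_{k}\right)$. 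For a genuinely harmonic map $\tau ^{a}\left( u\right) =0$, so that term vanishes identically, and what survives is only $\int_{M}\left\vert D\bar{\partial}u\right\vert ^{2}e^{-f}\leq \int_{M}\mathrm{Re}\left( \left( u_{j}^{a}f_{\bar{j}}\right) \overline{\left( u_{\bar{k}}^{a}f_{k}\right) }\right) e^{-f}$, whose right-hand side has no sign. Pluriharmonicity is not the issue (for compact $M$ it is exactly Siu's theorem, which is what the paper cites), but your pointwise identity $u_{j}^{a}f_{\bar{j}}=u_{\bar{k}}^{a}f_{k}$ does not follow from this computation. That identity is precisely $i\left( J\left( \nabla f\right) \right) du=0$, and the paper obtains it by a global argument of an entirely different nature: since $\nabla f$ is real holomorphic, $J\left( \nabla f\right)$ is Killing; its flow $\phi _{t}$ is a one-parameter group of isometries fixing $x_{0}$; hence $u\circ \phi _{t}$ is a family of harmonic maps homotopic to $u$, and the Schoen--Yau uniqueness theorem for harmonic maps into nonpositively curved targets, combined with $u_{t}\left( x_{0}\right) =u\left( x_{0}\right)$, forces $u\circ \phi _{t}=u$; differentiating in $t$ gives (\ref{h2}).

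Second, the Bochner step for $P$ also fails. Your verification that $P$ is a holomorphic section is correct---it is exactly the first line of (\ref{h3})---but the curvature term in $\Delta \left\vert P\right\vert ^{2}$ is the trace of $u^{\ast }R^{T^{1,0}N}$, which equals $\sum_{j}R_{a\bar{b}c\bar{d}}P^{c}\overline{P^{d}}\left( u_{j}^{a}\overline{u_{j}^{b}}-u_{\bar{j}}^{a}\overline{u_{\bar{j}}^{b}}\right)$: because the curvature form of $T^{1,0}N$ is of type $\left( 1,1\right)$ on $N$, pulling back by $du$ produces a $\partial u$-part and a $\bar{\partial}u$-part entering with \emph{opposite} signs. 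Strong seminegativity (or nonpositive bisectional curvature) gives each group a definite sign, but opposite ones, so their sum is unsigned and $\left\vert P\right\vert ^{2}$ need not be subharmonic; this is precisely why Siu's argument requires the antisymmetrized combination in (\ref{*}) rather than a naive trace. Moreover, even if this step could be salvaged, your route to $f_{k}u_{\bar{k}}^{a}\equiv 0$ passes through the identity $P=Q$ from the flawed first step, so the argument does not close. The paper avoids any curvature sign here: once (\ref{h2}) is in hand, the second line of (\ref{h3}) shows $P$ is also antiholomorphic, i.e.\ parallel, so $\left\vert P\right\vert$ is constant with no Bochner formula needed, and vanishing at the critical point $x_{0}$ gives $P\equiv 0$, hence $i\left( \nabla f\right) du=0$. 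In short, the missing ingredient is the Killing-flow/uniqueness argument, and neither of your two substitutes for it works as stated.
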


\begin{proof}
Since $u:M\rightarrow N$ is harmonic and $N$ has strongly seminegative
curvature, Siu's theorem in \cite{S} implies that $u$ is pluriharmonic, or 
\begin{equation}
u_{j\bar{k}}^{a}=0.  \label{h1}
\end{equation}

We now define the flow induced by the vector field $J(\nabla f).$ 
\begin{eqnarray*}
\frac{d\phi _{t}}{dt} &=&J\left( \nabla f\right) \left( \phi _{t}\right) \\
\phi _{0} &=&\mathrm{Id.}
\end{eqnarray*}%
Since $\nabla f$ is real holomorphic, $J\left( \nabla f\right) $ is a
Killing vector field. So $\phi _{t}$ is a one parameter group of isometries
of $M$. In particular, 
\begin{equation*}
u_{t}:=u\circ \phi _{t}
\end{equation*}%
is a continuous family of harmonic maps from $M$ to $N$. Since $N$ has
strongly seminegative curvature, it has nonpositive sectional curvature as
well. We now use the uniqueness theorem for harmonic maps in \cite{SY1} to
show that $u_{t}=u$ for all $t\geq 0.$ Indeed, lifting $u_{t}$ to the
universal coverings $\widetilde{M}$ and $\widetilde{N},$ we get a family of
harmonic maps $\widetilde{u_{t}}:\widetilde{M}\rightarrow \widetilde{N}.$
Using the fact that $u_t$ is homotopic to $u$ and $N$ has nonpositive
curvature, a standard computation shows that $\widetilde{r}^{2}\left( 
\widetilde{u_{0}},\widetilde{u_{t}}\right) $ descends to $M$ and is
subharmonic, where $\widetilde{r}$ is the distance function on $\widetilde{N}%
.$ Therefore, for each fixed $t,$ $\widetilde{r}^{2}\left(\widetilde{u_{0}},%
\widetilde{u_{t}}\right) $ is a constant function on $M$ as $M$ is compact.
However, at the minimum point $x_{0} $ of $f,$ $\phi _{t}(x_0)=x_0.$ This
means that $u_{t}\left( x_{0}\right) =u_{0}\left( x_{0}\right)$ for all $%
t\geq 0.$ In turn, it shows that $\widetilde{r}^{2}\left( \widetilde{u_{0}}%
\left( \widetilde{x_{0}}\right) ,\widetilde{u_{t}}\left( \widetilde{x_{0}}%
\right) \right) =0$. Hence, $\widetilde{r}^{2}\left( \widetilde{u_{0}},%
\widetilde{u_{t}}\right) =0$ and $u_{0}=u_{t}$ for all $t\geq 0$.

We now differentiate the equation $u_{0}=u_{t}$ in $t$ and get that 
\begin{eqnarray*}
0 &=&\frac{d}{dt}u_{t} \\
&=&i\left( J\left( \nabla f\right) \right) du.
\end{eqnarray*}%
This means, in complex coordinates, that 
\begin{equation}
u_{k}^{a}f_{\bar{k}}=u_{\bar{k}}^{a}f_{k}.  \label{h2}
\end{equation}%
Using (\ref{h1}), we see that 
\begin{eqnarray}
\left( u_{k}^{a}f_{\bar{k}}\right) _{\bar{j}} &=&u_{k\bar{j}}^{a}f_{\bar{k}%
}+u_{k}^{a}f_{\bar{k}\bar{j}}=0  \label{h3} \\
\left( u_{\bar{k}}^{a}f_{k}\right) _{j} &=&u_{j\bar{k}}^{a}f_{k}+u_{\bar{k}%
}^{a}f_{kj}=0,  \notag
\end{eqnarray}%
where we have also made use of $f_{kj}=f_{\bar{k}\bar{j}}=0$ as $\nabla f$
is real holomorphic. By (\ref{h2}) and (\ref{h3}), we conclude that 
\begin{equation*}
\left( u_{k}^{a}f_{\bar{k}}\right) _{\bar{j}}=\left( u_{k}^{a}f_{\bar{k}%
}\right) _{j}=0.
\end{equation*}%
This forces the function $u_{k}^{a}f_{\bar{k}}$ to be constant on $M$. Since 
$f$ has a critical point on $M,$ $u_{k}^{a}f_{\bar{k}}=u_{\bar{k}%
}^{a}f_{k}=0 $ on $M$. This proves that $i\left( \nabla f\right) du=0.$ By
Lemma \ref{local}, $u$ must be constant.
\end{proof}

As mentioned earlier, the existence of a function $f$ with $\nabla f$ being
real holomorphic on $M$ has strong implications on the topology of $M$. An
early result in this direction was proved by Frankel \cite{F}, which states
that all odd Betti numbers of a compact K\"{a}hler manifold must be zero if
it has a Killing vector field whose zero set is non-empty and discrete.
Howard \cite{H} proved a result of similar nature. In particular, it says
that a projective manifold $M$ admitting a holomorphic vector field with
nonempty and discrete zero set has only trivial holomorphic $(p,0)$-forms.

We establish below a vanishing result for holomorphic forms under an
assumption that is more in the spirit of the preceding Liouville type
results.

\begin{theorem}
Let $\left( M,g\right) $ be a complete K\"{a}hler manifold. Suppose there
exists $f\in C^{\infty }\left( M\right) $ such that $\nabla f$ is real
holomorphic and bounded on $M$. Assume in addition that $f$ has an isolated
minimum point in $M.$ Then any $L^{2}$ holomorphic $(p,0)-$form on $M$ must
be zero for all $p\geq 0.$
\end{theorem}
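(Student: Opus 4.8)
The plan is to combine three ingredients: the fact that a holomorphic form is weighted $\bar\partial$-harmonic, the complete holomorphic gradient flow generated by $\nabla f$, and unique continuation for holomorphic coefficients.

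First I would record the reductions that make the two standing hypotheses enter at the outset. A holomorphic $(p,0)$-form $\omega$ satisfies $\bar\partial\omega=0$ and $\bar\partial^\ast\omega=0$, hence $\Delta^{\bar\partial}\omega=0$. Since $\omega$ has no $d\bar z$ components, $i(\nabla^{0,1}f)\omega=0$, so by the formulas in (\ref{f2}) and Proposition \ref{pq_proof} one also gets $\bar\partial_f^\ast\omega=0$ and $\Delta_f^{\bar\partial}\omega=0$, while $\Delta_f\omega=\Delta_f^\partial\omega=\mathcal{L}_{\nabla^{1,0}f}\omega$ by the Cartan formula (using $d=\partial$ on $\omega$). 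Because $x_0$ is a minimum, $f\ge f(x_0)$, so $e^{-f}$ is bounded above and every $L^2$ form automatically lies in $L^2(e^{-f}\,dv)$; moreover the boundedness of $\nabla f$ guarantees that the flows of $\nabla f$ and of the Killing field $J(\nabla f)$ are complete.

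The heart of the argument is a contraction estimate along the gradient flow. Since $\nabla f$ is real holomorphic, its flow $\Psi_s$ is a complete one-parameter group of biholomorphisms fixing $x_0$; run in the direction of decreasing $f$ it contracts the basin of the isolated minimum to $x_0$, and each $\Psi_s^\ast\omega$ is again a holomorphic $(p,0)$-form. In the model top-degree case $p=m$, the quantity $\int_M i^{m^2}\omega\wedge\bar\omega=c\,\|\omega\|_{L^2}^2$ is intrinsic and invariant under the diffeomorphisms $\Psi_s$, while pointwise on the basin $\Psi_s^\ast\omega\to 0$ because $\det d\Psi_s\to 0$ under the contraction; feeding the $L^2$ finiteness into a uniform-integrability argument then forces $\int_M i^{m^2}\omega\wedge\bar\omega=0$, i.e. $\omega\equiv 0$. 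For general $p$ I would run the same scheme after pairing $\omega$ with powers of the Kähler form, monitoring $\frac{d}{ds}\int_M|\Psi_s^\ast\omega|^2 e^{-f}$ via $\Delta_f\omega=\mathcal{L}_{\nabla^{1,0}f}\omega$ together with the curvature-free integration by parts $\|\partial\omega\|_f^2+\|\partial_f^\ast\omega\|_f^2=\int_M\langle\mathcal{L}_{\nabla^{1,0}f}\omega,\omega\rangle e^{-f}$; here the relation $f_{ij}=0$ is exactly what eliminates the curvature of $M$ and leaves only the Hessian terms $f_{i\bar j}$, which are favorable near the minimum.

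Finally, since in a local holomorphic frame the coefficients of $\omega$ are holomorphic functions, vanishing of $\omega$ on any open set propagates to all of the connected manifold $M$ by the identity theorem; thus it suffices to produce vanishing near $x_0$. The main obstacle I anticipate is precisely the $L^2$ bookkeeping along the flow: for $p<m$ the $L^2$ norm is not flow-invariant, so one must control $\frac{d}{ds}\|\Psi_s^\ast\omega\|^2$ and rule out escape of mass to infinity (this is where the boundedness of $\nabla f$, hence the bounded flow speed, is essential), and one must handle a possibly degenerate isolated minimum, where the zero directions of the complex Hessian $(f_{i\bar j})$ obstruct a naive weight-positivity argument and force a more careful analysis of the jet of $\omega$ at $x_0$.
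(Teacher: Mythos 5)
Your peripheral reductions are fine (holomorphic $\Rightarrow$ $\bar\partial$- and weighted-harmonic, completeness of the flow, the identity-theorem endgame), but the heart of your argument does not close, and the failure is structural rather than technical. Write $G_s\,dv:=i^{m^2}\Psi_s^{\ast }\omega\wedge\overline{\Psi_s^{\ast }\omega}$ in your top-degree model case. Diffeomorphism invariance gives $\int_M G_s\,dv=\int_M i^{m^2}\omega\wedge\bar\omega$ for every $s$, i.e. the quantity you want to kill is \emph{constant} in $s$, while $G_s\to 0$ pointwise on the basin of attraction. Taken together, these two facts force the mass of $G_s$ to migrate out of every fixed compact set (into the complement of the basin, or off to infinity). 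So the uniform integrability/tightness you invoke cannot be "fed" from the $L^2$ finiteness of $\omega$: it is exactly the statement that fails unless $\omega\equiv 0$, which makes the step circular. Concretely, on $\mathbb{C}$ with $f=\left\vert z\right\vert^{2}$ (flow $\Psi_s(z)=e^{-cs}z$, complete even though $\nabla f$ is unbounded here), one has $\Psi_s^{\ast }(h\,dz)=e^{-cs}h(e^{-cs}z)\,dz\to 0$ pointwise for \emph{every} holomorphic $h$, while $\int_{\mathbb{C}}\left\vert \Psi_s^{\ast }(h\,dz)\right\vert^{2}$ is constant: all mass escapes to infinity and no contradiction appears. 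Your sketch contains no step in which the boundedness of $\nabla f$ actually restores tightness, and your scheme for $p<m$ (pairing with powers of the K\"ahler form, sign of $f_{i\bar j}$) is, by your own admission, open: the Hessian has a favorable sign only near the minimum, so the global identity you propose has no sign.

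What is missing is the paper's inductive mechanism, which replaces the dynamics entirely and is where both hypotheses are actually spent. Given an $L^{2}$ holomorphic $(p,0)$-form $\omega$, the contracted $(p-1,0)$-form $\theta:=\omega\left( \cdot,\dots,\cdot,\nabla f\right)$, with components $\omega_{i_{1}\cdots i_{p}}f_{\overline{i_{p}}}$, is again holomorphic because $\nabla f$ is real holomorphic, and again $L^{2}$ because $\nabla f$ is bounded; induction on $p$ (the case $p=0$ being the vanishing of $L^{2}$ holomorphic functions) gives $\theta\equiv 0$, i.e. $i\left( \nabla^{1,0}f\right) \omega=0$. Note this is the nontrivial contraction; your outline only records $i\left( \nabla^{0,1}f\right) \omega=0$, which holds for type reasons and carries no information. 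The isolated minimum is then used purely locally: on $D\left( \varepsilon\right) =\left\{ f\leq\varepsilon\right\} \cap U$ near $x_{0}$, the holomorphic Poincar\'e lemma writes $\omega=\partial\eta$, and
\begin{equation*}
\int_{D\left( \varepsilon\right) }\left\vert \omega\right\vert ^{2}=-\int_{D\left( \varepsilon\right) }\left\langle \partial^{\ast }\omega,\eta\right\rangle +\int_{\partial D\left( \varepsilon\right) }\left\langle \omega,df\wedge\eta\right\rangle \frac{1}{\left\vert \nabla f\right\vert }=0,
\end{equation*}
the interior term vanishing because an $L^{2}$ holomorphic form is co-closed and the boundary term vanishing precisely because $i\left( \nabla^{1,0}f\right) \omega=0$. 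This yields $\omega=0$ on $D\left( \varepsilon\right)$, and unique continuation (your last step) finishes. If you insist on a flow-theoretic route, the fact you would need to extract from the dynamics is exactly $i\left( \nabla^{1,0}f\right) \omega=0$; on a noncompact manifold the invariance of $\omega$ under the flow needed for such an argument is not available for free, which is why the contraction-plus-induction argument is the efficient one.
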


\begin{proof}
We proceed by induction on $p.$ For $p=0,$ this is certainly true as any $%
L^{2}$ holomorphic function must be constant. Let us assume that the result
holds for $\left( p-1\right).$ We now prove it for $p-$forms. Consider 
\begin{equation*}
\omega =\frac{1}{p!}\omega _{i_{1}....i_{p}}dz^{i_{1}}\wedge ...\wedge
dz^{i_{p}},
\end{equation*}%
an $L^{2}$ holomorphic $p-$form. Now the $\left( p-1\right) $ form 
\begin{eqnarray*}
\theta &=&\omega \left( \cdot ,...,\cdot ,\nabla f\right) \\
&=&\frac{1}{\left( p-1\right) !}\left( \omega _{i_{1}....i_{p}}f_{\overline{%
i_{p}}}\right) dz^{i_{1}}\wedge ...\wedge dz^{i_{p-1}}
\end{eqnarray*}%
is holomorphic as $\nabla f$ is real holomorphic and $\omega $ is
holomorphic. It is also in $L^{2}$ as $\nabla f$ is bounded on $M.$ By the
induction hypothesis, $\theta =0.$ Hence, 
\begin{equation}
\omega _{i_{1}....i_{p}}f_{\overline{i_{p}}}=0.  \label{ho1}
\end{equation}%
The rest of the argument is local and around an isolated minimum point $%
x_{0} $ of $f.$ Note that since $\omega $ is holomorphic and in $L^{2},$ it
is also harmonic, closed and co-closed. Thus, in a fixed complex local
coordinate chart $U$ at $x_{0},$ we know that $\omega $ is exact, i.e., $%
\omega =\partial \eta $ for a $\left( p-1,0\right) $ form $\eta $ defined on 
$U$.

We normalize $f$ so that $f\left( x_{0}\right) =0.$ Since $x_{0}$ is an
isolated critical point, there exists $\varepsilon >0$ small enough so that
the level set $\left\{ f=\varepsilon \right\} $ has a connected component
completely contained in $U.$ Let 
\begin{equation*}
D\left( \varepsilon \right) :=\left\{ f\leq \varepsilon \right\} \cap U
\end{equation*}%
and note that $\partial D\left( \varepsilon \right) $ has normal vector $\nu
:=\frac{\nabla f}{\left\vert \nabla f\right\vert }$. On $D\left( \varepsilon
\right) $ we have that $\omega =\partial \eta .$ Therefore, 
\begin{eqnarray*}
\int_{D\left( \varepsilon \right) }\left\vert \omega \right\vert ^{2}
&=&\int_{D\left( \varepsilon \right) }\left\langle \omega ,\partial \eta
\right\rangle \\
&=&-\int_{D\left( \varepsilon \right) }\left\langle \partial ^{\ast }\omega
,\eta \right\rangle +\int_{\partial D\left( \varepsilon \right)
}\left\langle \omega ,df\wedge \eta \right\rangle \frac{1}{\left\vert \nabla
f\right\vert } \\
&=&0
\end{eqnarray*}%
as $\omega $ is co-closed and $\omega \left( \cdot ,...,\cdot ,\nabla
f\right) =0$. This proves that $\omega =0$ on $D\left( \varepsilon \right).$
Thus, $\omega =0$ on $M$ by the unique continuation property. This proves
the theorem.
\end{proof}

\address {\noindent Department of Mathematics\\ University of Connecticut\\
Storrs, CT 06268\\ USA\\ \email{\textit{E-mail address}: {\tt
ovidiu.munteanu@uconn.edu} } \vskip 0.3in \address {\noindent School of
Mathematics \\ University of Minnesota\\ Minneapolis, MN 55455\\ USA\\ 
\email{\textit{E-mail address}: {\tt jiaping@math.umn.edu}}

\end{document}